\definecolor{ffqqqq}{rgb}{1,0,0}
\definecolor{wrwrwr}{rgb}{0.3803921568627451,0.3803921568627451,0.3803921568627451}
\definecolor{ffqqqq}{rgb}{1,0,0}
\definecolor{wrwrwr}{rgb}{0.3803921568627451,0.3803921568627451,0.3803921568627451}
\definecolor{rvwvcq}{rgb}{0.08235294117647059,0.396078431372549,0.7529411764705882}
\definecolor{cqcqcq}{rgb}{0.7529411764705882,0.7529411764705882,0.7529411764705882}
\definecolor{ffffff}{rgb}{1,1,1}
\definecolor{ccqqqq}{rgb}{0.8,0,0}
\newtheorem{theorem}{Theorem}[section]
\newtheorem{lemma}[theorem]{Lemma}
\newtheorem{corollary}[theorem]{Corollary}
\newtheorem{remark}[theorem]{Remark}
\newtheorem{question}[theorem]{Question}
\newcommand{\N}{{\mathbb N}}
\newcommand{\Hyp}{{\mathbb H}}
\newcommand{\Hy}{{\mathbb H}}
\newcommand{\Z}{{\mathbb Z}}
\newcommand{\R}{{\mathbb R}}
\newcommand{\Q}{{\mathbb Q}}
\newcommand{\T}{{\mathbb T}}
\newcommand{\Int}{\mbox{Int}}
\newcommand{\Vol}{\mbox{Vol}}
\begin{document}

\title{Algebraic intersection for translation surfaces in a family of  Teichm\H{u}ller disks}
\author{ S. Cheboui, A. Kessi, D. Massart}
\date{\today}
\maketitle
\begin{abstract}
We give a hyperbolic-geometric construction to compute the quantity KVol defined in Equation (\ref{defKVol}) in a family of Teichm\H{u}ller disks of square-tiled surfaces.
\end{abstract}
\section{Introduction}
\subsection{Definitions}
Let $X$  be a closed surface, that is, a compact, connected manifold of dimension 2, without boundary. Let us assume that $X$ is oriented. If two $C^1$ closed curves $\alpha$ and $\beta$ in $X$ intersect transversally at a point   $P \in X$, we set $\mbox{Int}_P (\alpha, \beta)=1$ if $\beta$ crosses $\alpha$ from right to left, and $\mbox{Int}_P (\alpha, \beta)=-1$ otherwise. Then the algebraic intersection 
$\mbox{Int} (\alpha, \beta)$ of $\alpha$ and $\beta$ is the sum over all intersection points $P$ of  $\mbox{Int}_P (\alpha, \beta)$. The algebraic intersection endows the first homology $H_1(X,\R)$ with a symplectic bilinear form. In particular $\mbox{Int} (\alpha, \beta)$ is finite, and only depends on the homology classes of $\alpha$ and $\beta$.

Now let us assume $X$ is endowed with a Riemannian metric $g$. We denote $\mbox{Vol}(X,g)$ the Riemannian volume of $X$ with respect to the metric $g$, and for any piecewise smooth closed curve $\alpha$ in $X$, we denote $l_g(\alpha)$ the length of $\alpha$ with respect to $g$. When there is no ambiguity we omit the reference to $g$. 

We are interested in the quantity
\begin{equation}\label{defKVol}
\mbox{KVol}(X,g) = \Vol (X,g) \sup_{\alpha,\beta} \frac{\Int (\alpha,\beta)}{l_g (\alpha) l_g (\beta)}
\end{equation}
where the supremum ranges over all piecewise smooth closed curves $\alpha$ and $\beta$ in $X$. The $\Vol (X,g)$ factor is there to make $\mbox{KVol}$ invariant to re-scaling of the metric $g$. See \cite{MM} as to why $\mbox{KVol}$ is finite. The quantity KVol comes up naturally when you want to compare the stable norm (a norm which measures the length of a homology class, with respect to the metric $g$) with the Hodge norm (or $L^2$-norm) on $H_1 (M,\R)$ (see \cite{MM}). It is easy to make $\mbox{KVol}$ go to infinity, you just need to pinch a non-separating closed curve $\alpha$ to make its length go to zero. The interesting surfaces are those $(X,g)$ for which $\mbox{KVol}$ is small. 

When $X$ is the torus, we have $\mbox{KVol} (X,g) \geq 1$, with equality if and only if the metric $g$ is flat (see \cite{MM}). Furthermore, when $g$ is flat, the supremum in (\ref{defKVol}) is not attained, but for a negligible subset of the set of all flat metrics.   In \cite{MM} $\mbox{KVol}$ is studied as a function of $g$, on the moduli space of hyperbolic  (that is, the curvature of $g$ is $-1$) surfaces of fixed genus. It is proved that $\mbox{KVol}$ goes to infinity when $g$ degenerates by pinching a non-separating closed curve, while $\mbox{KVol}$ remains bounded when $g$ degenerates by pinching a separating closed curve.

This leaves open the question whether $\mbox{KVol}$ has a minimum over the moduli space of hyperbolic surfaces of genus $n$, for $n \geq 2$. It is conjectured in \cite{MM} that for almost every $(X,g)$ in the moduli space of hyperbolic surfaces of genus $n$, the supremum in (\ref{defKVol})  is  attained (that is, it is actually a maximum). 

In this paper we consider a different class of surfaces : translation surfaces of genus $s$, with one conical point. The set (or stratum) of such surfaces is denoted $\mathcal{H}(2s-2)$ (see \cite{HL}). 
We consider the family of  translation surfaces $St(2s-1)$ (so named after \cite{Schmithusen}) depicted in Figure \ref{L22}, for $s \in \N, s \geq 2$,  obtained by gluing the opposite sides of a staircase-shaped template made of $2s-1$ squares (see Figure \ref{L22}).

\begin{figure} [h!]
\begin{center}

\begin{tikzpicture}[scale=2]
\draw (0,0) -- (2,0);
\draw (0,0) -- (0,1);
\draw (1,0) -- (1,2);
\draw (2,0) -- (2,2);
\draw (0,1) -- (2,1);
\draw (1,0) -- (1,1);
\draw (1,2) -- (2,2);
\draw (0,0) node {$\bullet$} ;
\draw (1,0) node {$\bullet$} ;
\draw (2,0) node {$\bullet$} ;
\draw (0,1) node {$\bullet$} ;
\draw (1,1) node {$\bullet$} ;
\draw (2,1) node {$\bullet$} ;
\draw (2,2) node {$\bullet$} ;
\draw (1,2) node {$\bullet$} ;
\draw (0.5,0) node {$\backslash $};
\draw (0.5,1) node {$\backslash $};
\draw (1.5,0) node {$|$};
\draw (1.5,2) node {$|$};
\draw (0,0.5) node {$- $};
\draw (2,0.5) node {$-$};
\draw (2,1.5) node {$=$};
\draw (1,1.5) node {$=$};

\draw (3,0) -- (5,0);
\draw (3,1) -- (6,1);
\draw (4,2) -- (6,2);
\draw (5,3) -- (6,3);
\draw (3,0) -- (3,1);
\draw (4,0) -- (4,2);
\draw (5,0) -- (5,3);
\draw (6,1) -- (6,3);
\draw (3,0) node {$\bullet$} ;
\draw (4,0) node {$\bullet$} ;
\draw (5,0) node {$\bullet$} ;
\draw (3,1) node {$\bullet$} ;
\draw (4,1) node {$\bullet$} ;
\draw (5,1) node {$\bullet$} ;
\draw (6,1) node {$\bullet$} ;
\draw (4,2) node {$\bullet$} ;
\draw (5,2) node {$\bullet$} ;
\draw (6,2) node {$\bullet$} ;
\draw (5,3) node {$\bullet$} ;
\draw (6,3) node {$\bullet$} ;
\draw (3.5,0) node {$\backslash $};
\draw (3.5,1) node {$\backslash $};
\draw (4.5,0) node {$|$};
\draw (4.5,2) node {$|$};
\draw (3,0.5) node {$- $};
\draw (5,0.5) node {$-$};
\draw (4,1.5) node {$=$};
\draw (6,1.5) node {$=$};
\draw (5.5,1) node {$\backslash \backslash$};
\draw (5.5,3) node {$\backslash \backslash$};
\draw (5, 2.5) node {x};
\draw (6, 2.5) node {x};
\end{tikzpicture}
\caption{ $St(3)$  and $St(5)$} \label{L22}
\end{center}
\end{figure}
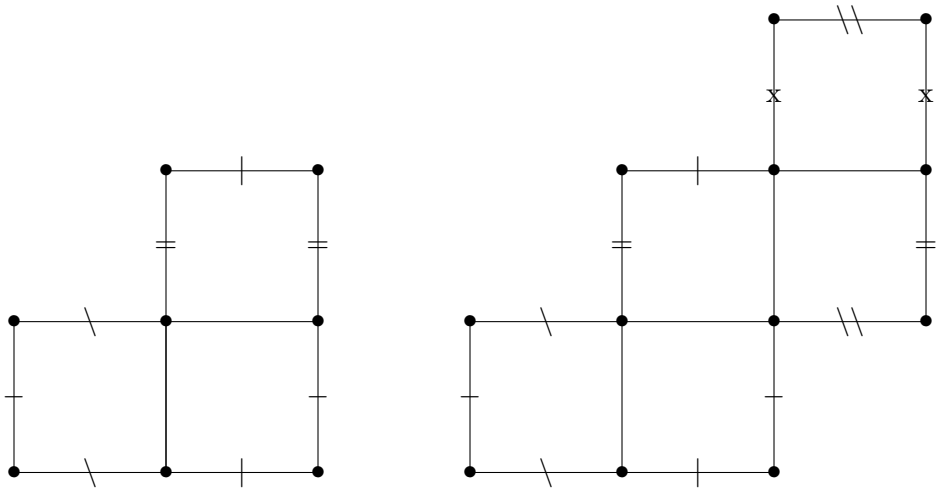

Our first result is (see Corollary \ref{corL22})
\[
\forall s \geq 2, \  \mbox{KVol}(St(2s-1))=2s-1.
\]
This is the first exact computation of $\mbox{KVol}$, outside of flat tori.

Before stating our next result we need to elaborate a little bit on the Teichm\H{u}ller disk of $St(2s-1)$.
\subsection{The Teichm\H{u}ller disc $\mathcal{T}$ of $St(2s-1)$}
Let us explain the terminology. Every translation surface may be viewed as a plane template with parallel sides of equal length pairwise identified. The group $GL_2^+ (\R )$ acts linearly on templates, preserving identifications, so it acts on translation surfaces. It may happen that for some template $T$ and some element $A$ of $GL_2^+ (\R )$, both $T$ and $A.T$ are templates of the same translation surface $X$. Then we say that $A$ lies in the Veech group of $X$, that is, the subgroup of $GL_2^+ (\R )$  which preserves $X$. Since the Veech group must preserve volumes, it is a subgroup of $SL_2 (\R )$, and it turns out to be a Fuchsian group (see \cite{Veech}). The orbit of $X$ under $GL_2^+ (\R )$ is called the Teichm\H{u}ller disk of $X$. For the purpose of studying $\mbox{KVol}$, it is convenient to identify templates which are related by a similitude transformation (an isometry composed with a re-scaling). Recall that $GL_2^+( \R )$ quotiented by similitudes is the hyperbolic plane $\Hyp ^2$. The Teichm\H{u}ller disk of $X$ may thus be seen as the quotient of $\Hyp ^2$ by the Veech group of $X$, which is a Fuchsian group.

 We denote by $\mathcal{T}(St(2s-1))$, or just $\mathcal{T}_s$ if there is no ambiguity, the Teichm\H{u}ller disc $\mathcal{T}$ of $St(2s-1)$.
The Veech group $\Gamma$ of $St(2s-1)$ is generated, for any $s \geq 2$,   by 
\[
T= \left(
\begin{array}{cc}
1&2\\
0&1
\end{array}
\right)
\mbox{ and }
R= \left(
\begin{array}{cc}
0&-1\\
1&0
\end{array}
\right),
\]
it is a subgroup of index $3$ in $SL_2 (\Z )$ (see \cite{Schmithusen2}, or \cite{D}). It has a fondamental domain $\mathcal{D}$ in $\Hyp ^2$ comprised between the straight lines $x=\pm 1$, and the unit half-circle centered at the origin. The two vertical straight lines are identified by $T$, and the two halves of the unit half-circle are identified by $R$, with $i$ as fixed point (see Figure \ref{disqueL22}).
\begin{figure} [h!]
\begin{center}

\begin{tikzpicture}[line cap=round,line join=round, scale=2, >=triangle 45,x=1cm,y=1cm]
\draw [line width=1pt] (1,-0.3157475471986166) -- (1,1.8323068949509285);
\draw [line width=1pt] (-1,-0.3157475471986166) -- (-1,1.8323068949509285);
\draw [shift={(0,0)},line width=1pt]  plot[domain=0:3.141592653589793,variable=\t]({1*1*cos(\t r)+0*1*sin(\t r)},{0*1*cos(\t r)+1*1*sin(\t r)});
\begin{scriptsize}
\draw [fill=ccqqqq] (1,0) circle (1pt);
\draw[color=ccqqqq] (1.1259144016816881,0.07051110438669614) node {$1$};
\draw [fill=ccqqqq] (0,1) circle (1pt);
\draw[color=ccqqqq] (0.035006130948055846,1.1) node {$i$};
\draw [fill=ccqqqq] (-1,1) circle (1pt);
\draw[color=ccqqqq] (-1.3,1.07119807516046) node {$- 1 +i$};
\draw [fill=ccqqqq] (-0.49221156784221043,0.8704756013136228) circle (1pt);
\draw[color=ccqqqq] (-0.6,0.7) node {$\exp (2i \pi/3)$};
\draw [fill=ccqqqq] (1,1) circle (1pt);
\draw[color=ccqqqq] (1.2259144016816881,1.07119807516046) node {$ 1 +i$};
\draw [fill=ccqqqq] (0.5007280546607461,0.8656046529886867) circle (1pt);
\draw[color=ccqqqq] (0.6,0.7) node {$\exp (i \pi/3)$};
\draw [fill=ccqqqq] (-1,0) circle (1pt);
\draw[color=ccqqqq] (-1.1754595398658453,0.06399197102660648) node {$-1$};
\draw [fill=ccqqqq] (0,0) circle (1pt);
\draw[color=ccqqqq] (0.025227430907921317,0.16399197102660648) node {$0$};
\end{scriptsize}
\end{tikzpicture}
\caption{A fundamental domain for the Teichm\H{u}ller disk of $St(2s-1)$}
\label{disqueL22}
\end{center}
\end{figure}
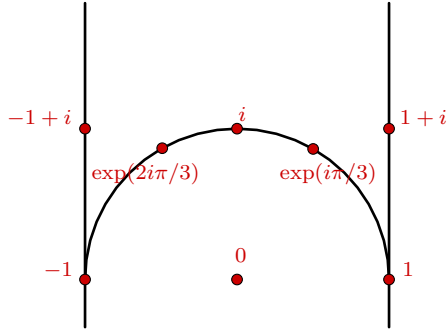

Here is a convenient way to view the Teichm\H{u}ller disk  $\mathcal{T}_s$ of $St(2s-1)$. Any element  $X$ of $\mathcal{T}_s$, such as the one depicted on the right in  Figure \ref{template}, has a template made of $2s-1$ congruent parallelograms.  Modulo some similitude, we may assume b=1. Then $\beta$ lies in the fundamental domain depicted above, and completely determines $X$, together with the number $2s-1$ of squares. We shall often label  the surface $X$ by the complex number $\alpha$, or the corresponding vector in $\R^2$, and the integer $s$. For instance,  $St(3)$ is $X_2(i)=X_2(0,1)$, and $St(2s-1)=X_s(i)$ for all $s$. There are other interesting surfaces in   $\mathcal{T}_2$  :  $X_2(\pm 1 +i)=X_2(\pm 1,1)$ is the only (other than $St(3)$) three-square surface of genus $2$,
\begin{figure}[h!]
\begin{center}
\begin{tikzpicture}
\draw [line width=1pt] (-5,0)-- (-5,3)-- (-4,3)-- (-4,0);
\draw [line width=1pt] (-5,0)-- (-4,0);
\draw [line width=.5pt,dash pattern=on 1pt off 1pt] (-5,1)--(-4,1);
\draw [line width=.5pt,dash pattern=on 1pt off 1pt] (-5,2)--(-4,2);

\draw (-4.73,0) node[anchor=north west] {b};
\draw (-4.69,3.56) node[anchor=north west] {b};
\draw (-5.83,3.06) node[anchor=north west] {$\beta$};
\draw (-5.75,2.1) node[anchor=north west] {$\alpha$};
\draw (-5.67,0.94) node[anchor=north west] {$\gamma$};
\draw (-3.97,3.04) node[anchor=north west] {$\gamma$};
\draw (-3.97,2.04) node[anchor=north west] {$\alpha$};
\draw (-3.93,1.02) node[anchor=north west] {$\beta$};

\draw [line width=0.8pt] (2,0)-- (6,0);
\draw [line width=0.8pt] (2,0)-- (4,3.4641016151377544);
\draw [line width=0.8pt] (3,1.7320508075688772)-- (7,1.7320508075688772);
\draw [line width=0.8pt] (6,0)-- (7,1.7320508075688772);
\draw [line width=0.8pt] (4,0)-- (5,1.7320508075688772);
\draw [line width=0.8pt] (4,3.4641016151377544)-- (6,3.4641016151377544);
\draw [line width=0.8pt] (5,1.7320508075688772)-- (6,3.4641016151377544);
\draw (2.49,-0.02) node[anchor=north west] {b};
\draw (4.73,0) node[anchor=north west] {a};
\draw (4.61,4.04) node[anchor=north west] {b};
\draw (5.5,2.4) node[anchor=north west] {a};
\draw (2.7,3.26) node[anchor=north west] {$\alpha$};
\draw (1.7,1.52) node[anchor=north west] {$\beta$};
\draw (5.69,3.16) node[anchor=north west] {$\alpha$};
\draw (6.79,1.54) node[anchor=north west] {$\beta$};
\end{tikzpicture}
\caption{$X_2(1,1)$ and $X_2(\exp (i \pi /3))$}
\label{template}
\end{center}
\end{figure}
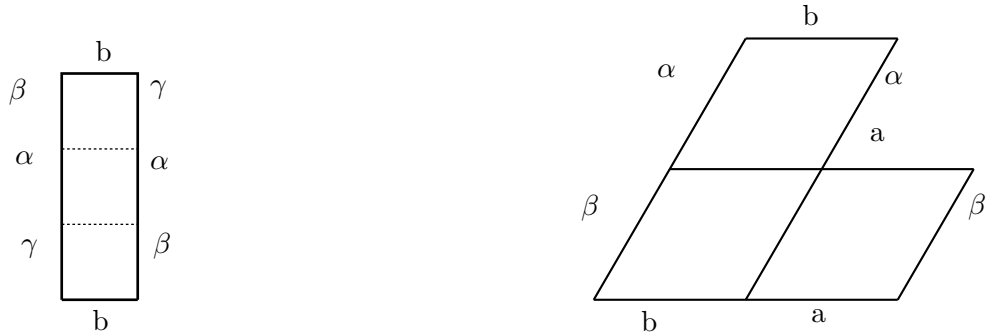
while $X_2(\exp (i \pi /3))=X_2(\exp (2i \pi /3))$ is the only translation surface of genus $2$ tiled by 6 equilateral triangles. It turns out that $X_s(1,1)$ and $X_s(\exp (i \pi /3))$ also have KVol$=2s-1$.

For $(x,y) \in \Hy^2$, we denote $T(x,y)$ the flat torus
\[
\R^2 / \Z (1,0) \oplus \Z (x,y).
\]
The translation surface $X_s(x,y)$ is a ramified, $(2s-1)$-fold, Riemannian cover of $T(x,y)$. For $(p,q) \in \Z^2$ with $p \wedge q =1$, we say a  geodesic segment in $T(x,y)$ has direction $(p,q)$ if its lifts to the universal cover $\R^2$ of $T(x,y)$ are parallel to the vector $p(1,0)+q(x,y)$. We say a  geodesic segment  in $X_s(x,y)$ has direction $(p,q)$ if it projects to a geodesic segment of direction $(p,q)$ in $T(x,y)$.

Beware this may cause a bit of confusion at first, because a geodesic segment in $T(x,y)$ with direction $(p,q)$ may not lift, in $\R^2$, to a segment parallel to $(p,q)$, unless $(x,y)=(0,1)$. The upside is that this definition is independent of $(x,y)$. For instance, the algebraic intersection of two simple closed geodesics in $T(x,y)$, of respective directions $(p,q)$ and $(p',q')$, is $pq'-p'q$, regardless of $(x,y)$.

This allows a neat formulation of the following property : given $p,q,p',q' \in \Z$ with $p\wedge q = p'\wedge q'=1$, and $p/q \neq p'/q'$ in $\Q \cup \{ \infty \}$, the hyperbolic geodesic with endpoints $p/q$ and $p'/q'$ is the locus of the flat tori in which the geodesics with respective directions $(p,q)$ and $(p',q')$ are orthogonal. Since the covering $X(x,y) \longrightarrow T(x,y)$ is Riemannian, the hyperbolic geodesic with endpoints $p/q$ and $p'/q'$ is also the locus of surfaces $X(x,y)$ in which the geodesics with respective directions $(p,q)$ and $(p',q')$ are orthogonal. 

In the torus all directions are alike, because the group $SL_2 (\Z ) $ of orientation-preserving, affine diffeomorphims of $\T^2$ acts transitively on the set of directions. However (see \cite{HL}), the Veech group  $\Gamma$ of $St(2s-1)$ has two orbits, that of $(0,1)$, which comprises all directions $(p,q)$ with $p \neq q \mod 2$, and that of $(1,1)$, which comprises all directions $(p,q)$ with $p = q= 1 \mod 2$. 
Note that $(p,q)=(1,0)$ gives $p/q = \infty$, which is one of the points at infinity of the Teichm\H{u}ller disk $\mathcal{T}$, while $(1,1)$ gives $1$, which is the other point at infinity of $\mathcal{T}$.

Simple closed geodesics on a translation surface in $\mathcal{H}(2s-2)$ are of two kinds  (see \cite{HL}): saddle connections, which go through the conical point, and non-singular closed geodesics, which do not. Since our surfaces have only one conical point, every closed curve is homologous to a linear combination of closed saddle connections, so in the definition of KVol we may consider only closed saddle connections. Non-singular closed geodesics come in cylinders of parallel geodesics of equal length. In a translation surface in $\mathcal{H}(2s-2)$, for any direction, there are at most $s$ such cylinders. If the translation surface lies in $\mathcal{T}_s$, the $s$-cylinder directions are precisely those in the orbit, under the Veech group $\Gamma$, of $(0,1)$, and the other directions have only one cylinder. 

For any direction $(p,q)$, denote  $r= p/q$ and $l_r (x,y)$ the length, in the torus $T(x,y)$, of simple closed geodesics of direction  $(p,q)$. The horocycles with point at infinity $r$ are the level sets of the function 
$l_r$. We shall see (Lemma \ref{longueur}) that the saddle connections, in $X(x,y)$, of direction $(p,q)$, have the same length $l_r (x,y)$. 

Therefore there are two ways of going to infinity in the Teichm\H{u}ller disk $\mathcal{T}_s$ : $(x,y)$ may converge to $(1,0)$, in which case there is a one-cylinder direction in $X(x,y)$ whose saddle connections become arbitrarily short ; or $y \longrightarrow \infty$, in which case there is an $s$-cylinder direction in $X(x,y)$ whose saddle connections become arbitrarily short.

\subsection{Statement and discussion of the results}
 Let us call $V_{\pm 1}$ the open horocyclic neighborhood  of the lower cusp depicted in Figure \ref{horocycle}. Denote $\mathcal{Z}$  the union in $\Hy^2$ of all hyperbolic geodesics with endpoints in $\Z \cup \{\infty\}$, and their images under the Veech group $\Gamma$. Let us denote $\mbox{End}(\mathcal{Z})$ the set of pair of points at infinity of $\Hy^2$ which are endpoints of elements of $\mathcal{Z}$.
\begin{theorem}\label{disqueL(2,2)}
 For every $s \geq 2$, the minimum of $\mbox{KVol}$ over the Teichm\H{u}ller disk $\mathcal{T}_s$ is $(2s-1)\sqrt{\frac{143}{144}}$. It is achieved at the two points $(\pm \frac{9}{14}, \frac{\sqrt{143}}{14})$.
For $(x,y) \in V_{\pm 1}$, we have $\mbox{KVol}(X_s(x,y)) > 2s-1$, and $\mbox{KVol}(X_s(x,y))$ goes to infinity when $(x,y)$ tends to $(\pm 1, 0)$. Outside $V_{\pm 1}$, we have $\mbox{KVol}(X_s(x,y)) \leq 2s-1$, and $\mbox{KVol}(X_s(x,y))=2s-1$ if and only if $(x,y) \in \mathcal{Z}$. Furthermore $\mbox{KVol}(X_s(x,y))$ tends to $2s-1$ when $y\longrightarrow \infty$  while $(x,y)$ remains in $\mathcal{D}$.
\end{theorem}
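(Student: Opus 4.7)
I would express $\mbox{KVol}$ as a supremum indexed by pairs of rational directions. For distinct $r = p/q$ and $r' = p'/q'$ in $\Q \cup \{\infty\}$, set
\[
F_{r, r'}(x, y) := \Vol(X_s(x, y)) \cdot \sup_{\alpha, \beta} \frac{|\mbox{Int}(\alpha, \beta)|}{l(\alpha)\, l(\beta)},
\]
with $\alpha, \beta$ closed saddle connections in $X_s(x, y)$ of directions $(p, q), (p', q')$. Then $\mbox{KVol}(X_s(x, y)) = \sup_{r \neq r'} F_{r, r'}(x, y)$. The first step is to combine Lemma~\ref{longueur} (giving $l(\alpha) = l_r(x, y) = |q|\sqrt{(x+r)^2+y^2}$), the base-torus intersection number $|pq'-p'q|$, and the cylinder decomposition of $X_s(x, y)$ recalled before the theorem, to derive a formula of shape
\[
F_{r, r'}(x, y) = (2s-1) \cdot \frac{y\, |pq'-p'q|\, k(r, r')}{l_r(x, y)\, l_{r'}(x, y)},
\]
where $k(r, r') \in \{1, 2, \ldots\}$ is the ratio of the best algebraic intersection of a pair of saddle connections in the two directions to $|pq'-p'q|$, a combinatorial constant depending only on the $\Gamma$-orbits of $r$ and $r'$. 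The algebraic identity
\[
((x+r)^2+y^2)((x+r')^2+y^2) = ((x+r)(x+r')+y^2)^2 + y^2(r-r')^2
\]
then shows that $F_{r, r'}$ attains its maximum, equal to $(2s-1)\, k(r, r')$, on the hyperbolic geodesic singled out in the introduction as the orthogonality locus for directions $(p, q)$ and $(p', q')$.

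Next, I would verify that $k(r, r') = 1$ whenever at least one of $r, r'$ lies in the $(0, 1)$-orbit, so that $F_{r, r'}^{\max} = 2s-1$ and equality in $\mbox{KVol}$ is attained exactly on the corresponding geodesic. For pairs with both endpoints in the $(1, 1)$-orbit, a more delicate count of intersections of short saddle connections in $X_s(x, y)$ (including contributions at the shared conical singularity, where they meet as endpoints) gives $k(r, r') \geq 2$, so $F_{r, r'}^{\max} \geq 2(2s-1)$; however, the portion of the corresponding geodesic where this maximum is attained lies inside the cusp neighborhoods $V_{\pm 1}$. Piecing these facts together, $\mbox{KVol}(X_s(x, y)) \leq 2s-1$ outside $V_{\pm 1}$ with equality iff $(x, y)$ lies on some geodesic in $\mathcal{Z}$. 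Inside $V_{\pm 1}$, the $(1, 1)$-orbit pairs dominate and drive $\mbox{KVol} > 2s-1$, with blow-up at $(\pm 1, 0)$ produced by the shortening $l_{-1}(x, y) \to 0$ (respectively $l_1(x, y) \to 0$). The asymptotic $\mbox{KVol} \to 2s-1$ as $y \to \infty$ in $\mathcal{D}$ follows from $F_{0, \infty}(x, y) = (2s-1)y/\sqrt{x^2+y^2} \to 2s-1$ dominating every other term for large $y$ (since $l_r(x,y) \geq |q|y$).

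For the minimum in $\mathcal{D} \setminus V_{\pm 1}$, I would use the involution $x \mapsto -x$ to reduce to $x \geq 0$. The envelope $\sup_{r, r'} F_{r, r'}$ is minimized where two competing components coincide; a direct check, using the formula in Step~1 and the growth of $l_r$ in $|r|$, identifies the two dominant candidates near the right horocyclic boundary as $F_{-1, 1}$ and $F_{-1, 2}$. Setting these equal and simplifying, the equation reduces to $(p'-q')(p' - 2q') = 0$ with $(p', q') \in \{(1, 1), (2, 1)\}$, so the critical point is $(x, y) = (9/14, \sqrt{143}/14)$, where both functions take the common value $(2s-1)\sqrt{143/144}$. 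The verification that no other $F_{r, r'}$ exceeds this value at this point reduces to checking finitely many pairs, and the symmetric minimizer $(-9/14, \sqrt{143}/14)$ comes from the involution. The hardest step is the combinatorial determination of $k(r, r')$ for $(1, 1)$-orbit pairs, as it controls both the blow-up inside $V_{\pm 1}$ and the exact value of the minimum.
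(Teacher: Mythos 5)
Your overall skeleton --- reduce KVol to a supremum over pairs of rational directions, show that each term is maximized on the hyperbolic geodesic where the two directions are orthogonal, and locate the minimum of the envelope at the point of $\mathcal{D}$ farthest from the relevant family of geodesics --- is exactly the paper's strategy (its $K_{r,r'}=\cos\theta_{r,r'}$ and $I_{r,r'}$ play the role of your $F_{r,r'}$ and $k(r,r')$). But the combinatorial input you assign to $k(r,r')$ is wrong in both directions, and each error is fatal. First, it is not true that $k(r,r')=1$ whenever one of $r,r'$ lies in the $(0,1)$-orbit: the paper proves $I_{r,r'}=1$ only for $(r,r')\in\mbox{End}(\mathcal{Z})$, i.e.\ pairs $\Gamma$-equivalent to $(\infty,n)$ with $n\in\Z$, and $I_{r,r'}<9/10$ otherwise (Lemmata \ref{Irr'=1} and \ref{99_100}; e.g.\ $I_{3/7,\infty}=5/7$ even though $\infty$ is in the $(0,1)$-orbit). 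If your claim held, the geodesics realizing $F^{\max}_{r,r'}=2s-1$ would be dense in $\Hy^2$ and KVol would equal $2s-1$ everywhere outside $V_{\pm1}$, contradicting the strict minimum $(2s-1)\sqrt{143/144}$ you are trying to prove. The whole point of the set $\mathcal{Z}$ is that it is a \emph{non-dense} family of geodesics, so that its distance function has a positive maximum on $\mathcal{D}$. Second, $k(r,r')\geq 2$ for pairs with both directions odd is impossible: by Lemma \ref{1to1} a saddle connection projects injectively to the torus, so $\mbox{Int}(\alpha,\beta)\leq|pq'-p'q|$ and $k\leq 1$ always for distinct directions. The mechanism producing $\mbox{KVol}>2s-1$ in $V_{\pm1}$ is not a distinct-direction pair at all but a pair of saddle connections with the \emph{same} odd direction, meeting only at the cone point with intersection $\pm1$ and length tending to $0$; your supremum over $r\neq r'$ omits this contribution entirely, whereas the paper isolates it as the exceptional case of Lemma \ref{majorationPetitCarreaux} and the terms $J_{\pm1}$ in the final formula.

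A secondary but still substantive gap concerns the location of the minimum. The point $(9/14,\sqrt{143}/14)$ is the incenter of the hyperbolic triangle bounded by the three geodesics $\gamma_{-1,1}$, $\gamma_{-2,1}$, $\gamma_{0,2}$ of $\mathcal{Z}$, i.e.\ it is equidistant from three geodesics; equating only the two components $F_{-1,1}$ and $F_{-1,2}$ gives a curve in $\Hy^2$, not a point, and your reduction to ``$(p'-q')(p'-2q')=0$'' does not determine $(x,y)$. Moreover, verifying that no other geodesic of $\mathcal{Z}$ passes closer to this point is not a finite check: the paper needs a covering argument of the region $\mathcal{A}$ by the infinitely many banana neighbourhoods $V_{-n,1}$, $V_{0,2}$, $V_{1/2,\infty}$, organized so that all but finitely many are nested inside others. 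You would need to supply both the correct characterization of the extremal pairs (via $\mbox{End}(\mathcal{Z})$, not via orbit membership of a single endpoint) and this covering argument for the proof to go through.
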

\begin{figure}
\begin{center}
\begin{tikzpicture}[line cap=round,line join=round,>=triangle 45,x=2cm,y=2cm]
\clip(-2.9334693365393596,-0.5) rectangle (2.9198080888143774,1.5);
\draw [line width=0.8pt] (1,-1.776626971184012) -- (1,2.855627783952532);
\draw [line width=0.8pt] (-1,-1.776626971184012) -- (-1,2.855627783952532);
\draw [shift={(0,0)},line width=2pt,color=ffffff,fill=ffffff,fill opacity=0.1]  (0,0) --  plot[domain=2.085424903227732:3.141592653589793,variable=\t]({1*1*cos(\t r)+0*1*sin(\t r)},{0*1*cos(\t r)+1*1*sin(\t r)}) -- cycle ;
\draw [shift={(0,0)},line width=2pt,color=ffffff,fill=ffffff,fill opacity=0.1]  (0,0) --  plot[domain=2.085424903227732:3.141592653589793,variable=\t]({1*1*cos(\t r)+0*1*sin(\t r)},{0*1*cos(\t r)+1*1*sin(\t r)}) -- cycle ;
\draw [shift={(0,0)},line width=0.8pt]  plot[domain=0:3.141592653589793,variable=\t]({1*1*cos(\t r)+0*1*sin(\t r)},{0*1*cos(\t r)+1*1*sin(\t r)});
\draw [line width=1pt,dash pattern=on 2pt off 2pt,color=red, fill=red,fill opacity=0.10000000149011612] (-1,0.5) circle (1cm);
\draw [line width=1pt,dash pattern=on 2pt off 2pt,color=red, fill=red,fill opacity=0.10000000149011612] (1,0.5) circle (1cm);
\begin{scriptsize}
\draw [fill=ccqqqq] (1,0) circle (2pt);
\draw [fill=ccqqqq] (0,1) circle (2pt);
\draw [fill=ccqqqq] (-1,1) circle (2pt);
\draw [fill=ccqqqq] (1,1) circle (2pt);
\draw [fill=ccqqqq] (-1,0) circle (2pt);
\draw [fill=ccqqqq] (0,0) circle (2pt);
\end{scriptsize}
\end{tikzpicture}
\caption{The horocycle neighborhood $V_{\pm 1}$ of Theorem \ref{disqueL(2,2)}} \label{horocycle}
\end{center}
\end{figure}
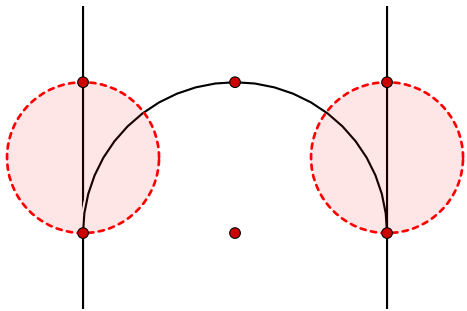
The idea of the proof of Theorem \ref{disqueL(2,2)} is that $\mbox{KVol} (X_s(x,y))$, has a simple expression as a function of the hyperbolic distance between 
$(x,y)$ and $\mathcal{Z}$.  The minimum is achieved by those points of $\mathcal{D}$ which are furthest away from $\mathcal{Z}$. Then the problem becomes a simple exercise in   hyperbolic geometry. 

Here is why the set $\mathcal{Z}$ enters the picture : given two directions $(p,q)$ and $(p',q')$, the saddle connections of respective directions $(p,q)$ and $(p',q')$ only have so much algebraic intersection to share between them all since they are lifted from closed curves on the torus. Thus, to realize KVol we must look for pairs of directions which have saddle connections which take up all the possible intersection. Those are the pairs of directions for which the invariant probability  measures supported on the saddle connections are least equidistributed. By Theorem 1.2 of \cite{Mc}, those pairs of directions correspond to those geodesics in $\mathcal{T}_s$ which are the least recurrent, that is, the geodesics which exit the fastest towards the cusps of $\mathcal{T}_s$. Those are precisely the geodesics in $\mathcal{Z}$. The precise meaning of exiting fastest towards the cusp is that they bounce at most twice on the lower boundary of $\mathcal{D}$-that is, the semi-circle between $1$ and $-1$- before going straight to a cusp. Another way of defining $\mathcal{Z}$ would be to say that the geodesic between $p/q$ and $p'/q'$ is in $\mathcal{Z}$ if and only if $(p/q,p'/q')$ is in the orbit, under the Veech group, of $(\infty, p''/q'')$, where the continued fraction expansion of $p''/q''$ has length $\leq 2$. 
A quantitative version of Theorem 1.2 of \cite{Mc} would be useful to generalize this work. In our proof, we use pedestrian arguments (Lemmata \ref{Irr'=1} and \ref{99_100}) instead.

It is proved in  \cite{JP} (see also \cite{HMS} and \cite{BG}) that $X_2(\exp (i \pi /3))$  minimizes the systolic volume in $\mathcal{H}(2)$. In the companion paper \cite{CKM}, we prove that $\inf \mbox{KVol} \leq 2 $ over  $\mathcal{H}(2)$. Since the systolic volume is a close relative of $\mbox{KVol}$, it is interesting to contrast the results of  \cite{JP}, \cite{BG} and  \cite{HMS} with ours. It is mildly surprising that the minimizers of KVol in $\mathcal{T}_s$ look, at first glance, pretty dull, while the interesting surfaces are (degenerate) local  maxima in $\mathcal{T}_s$. This prompts several questions. 
\begin{question}
Is KVol, as a function on $\mathcal{H}(2)$, differentiable at $St(3)$ ? Is it critical at $St(3)$ ?
\end{question}
\begin{question}
Is every square-tiled surface (ramified Riemannian covering of the square flat torus) in $\mathcal{H}(2)$, a local maximum of KVol in its own Teichm\H{u}ller disk ? Are they critical points of KVol  in $\mathcal{H}(2)$ ?
\end{question}
An easy consequence of Theorem \ref{disqueL(2,2)} is that for every $X \in \mathcal{T}$, the supremum in (\ref{defKVol}) is actually a maximum, and furthermore we identify the maximizers. We speculate this might be the case for every square-tiled surface ; since the union of the Teichm\H{u}ller disks of all square-tiled surfaces is dense in $\mathcal{H}(2)$, could it be true in the whole $\mathcal{H}(2)$ ? 
\begin{question}
Is it true that for every $X \in \mathcal{H}(2)$, the  supremum in (\ref{defKVol}) is actually a maximum ?
\end{question}
This would be in sharp contrast, both with the known behavior for flat tori, and with the expected behavior  for hyperbolic surfaces.\\ 
\textbf{Acknowledgements} The first author acknowledges the support of a Profas B+ 2017 grant from Campus France, during the course of his Ph.D. studies.
\section{Preliminaries}\label{preliminaries}
\subsection{$St(2s-1)$}
We call  $e_1, \ldots e_s$, $e'_2, \ldots, e'_s$, (resp. $f_1, \ldots, f_s$, $f'_1, \ldots, f'_{s-1}$)  the horizontal (resp. vertical) closed curves of length $1$ in  $St(2s-1)$  obtained by gluing the endpoints of the sides of the squares (see Figure \ref{e1e2}). Note that $e_i$ and $e'_i$ (resp. $f_i$ and $f'_i$) are homotopic since they bound a cylinder. 

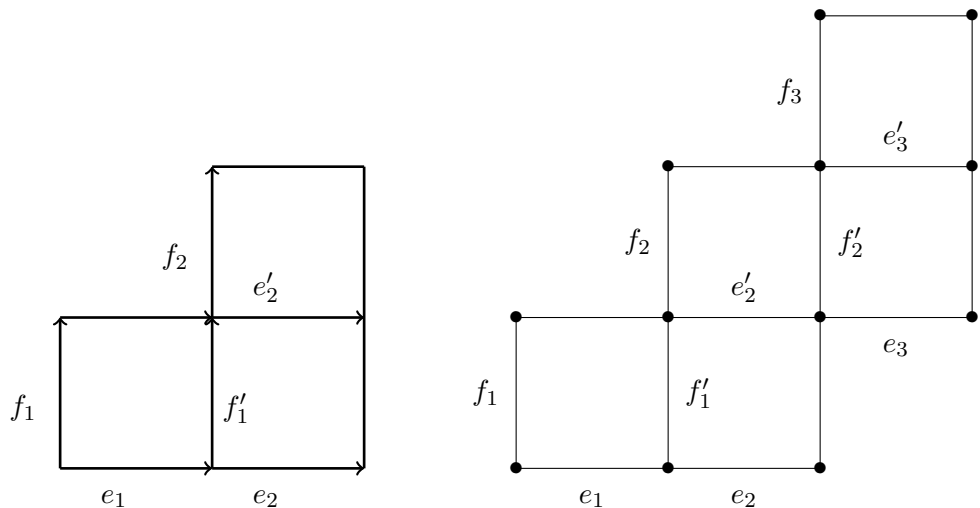
\begin{figure} [h!]
\begin{center}

\begin{tikzpicture}[scale=2]
\draw [->,line width=1pt,color=black] (0,0) -- (1,0);
\draw [->,line width=1pt,color=black] (1,0) -- (2,0);
\draw [->,line width=1pt,color=black] (0,0) -- (0,1);
\draw [->,line width=1pt,color=black] (1,1) -- (1,2);
\draw [->, line width=1pt,color=black] (1,0) -- (1,1);
\draw [->,line width=1pt,color=black] (0,1) -- (1,1);
\draw [->, line width=1pt,color=black] (1,1) -- (2,1);
\draw [,line width=1pt,color=black] (2,0) -- (2,2);
\draw [line width=1pt,color=black] (1,2) -- (2,2);

\draw (0.2, -0.2) node[right] {$e_1$};
\draw (1.2, 1.2) node[right] {$e'_2$};
\draw (1.2, -0.2) node[right] {$e_2$};
\draw (-0.4, 0.4) node[right] {$f_1$};
\draw (1, 0.4) node[right] {$f'_1$};
\draw (0.6, 1.4) node[right] {$f_2$};

\draw (3,0) -- (5,0);
\draw (3,1) -- (6,1);
\draw (4,2) -- (6,2);
\draw (5,3) -- (6,3);
\draw (3,0) -- (3,1);
\draw (4,0) -- (4,2);
\draw (5,0) -- (5,3);
\draw (6,1) -- (6,3);
\draw (3,0) node {$\bullet$} ;
\draw (4,0) node {$\bullet$} ;
\draw (5,0) node {$\bullet$} ;
\draw (3,1) node {$\bullet$} ;
\draw (4,1) node {$\bullet$} ;
\draw (5,1) node {$\bullet$} ;
\draw (6,1) node {$\bullet$} ;
\draw (4,2) node {$\bullet$} ;
\draw (5,2) node {$\bullet$} ;
\draw (6,2) node {$\bullet$} ;
\draw (5,3) node {$\bullet$} ;
\draw (6,3) node {$\bullet$} ;
\draw (3.5,-0.2) node {$e_1 $};
\draw (4.5,1.2) node {$e'_2$};
\draw (4.5,-0.2) node {$e_2$};
\draw (5.5,2.2) node {$e'_3$};
\draw (2.8,0.5) node {$f_1$};
\draw (3.8,1.5) node {$f_2$};
\draw (5.2,1.5) node {$f'_2$};
\draw (5.5,0.8) node {$e_3$};
\draw (4.8, 2.5) node {$f_3$};
\draw (4.2,0.5) node {$f'_1$};

\end{tikzpicture}
\caption{ the closed curves $e_i, e'_i,  f_i, f'_i$ } \label{e1e2}
\end{center}
\end{figure}

Figure \ref{local pic} shows a local picture of $St(3)$ around the singular (conical) point $S$, with angles rescaled so the $6\pi$ fit into $2\pi$.
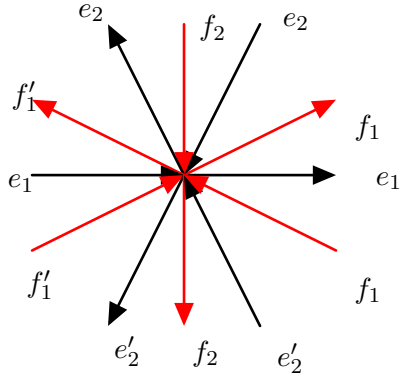
\begin{figure}[h!]
\begin{center}
\begin{tikzpicture}[line cap=round,line join=round,>=triangle 45,x=1cm,y=1cm]
\clip(-3.36621647446458,-3.1902) rectangle (3.786216474464581,2.4702);
\draw [->,line width=1pt,color=black] (0,0) -- (2,0);
\draw [->,line width=1pt,color=red] (0,0) -- (2,1);
\draw [->,line width=1pt,color=black] (1,2) -- (0,0);
\draw [->,line width=1pt,color=red] (0,2) -- (0,0);
\draw [->,line width=1pt,color=black] (0,0) -- (-1,2);
\draw [->,line width=1pt,color=red] (0,0) -- (-2,1);
\draw [->,line width=1pt,color=black] (-2,0) -- (0,0);
\draw [->,line width=1pt,color=red] (-2,-1) -- (0,0);
\draw [->,line width=1pt,color=black] (0,0) -- (-1,-2);
\draw [->,line width=1pt,color=red] (0,0) -- (0,-2);
\draw [->,line width=1pt,color=black] (1,-2) -- (0,0);
\draw [->,line width=1pt,color=red] (2,-1) -- (0,0);
\draw (2.3874355848434936,0.17619934102141693) node[anchor=north west] {$e_1$};
\draw (2.107679406919276,0.9408662273476115) node[anchor=north west] {$f_1$};
\draw (1.165833607907744,2.2836958813838555) node[anchor=north west] {$e_2$};
\draw (0.06545930807248848,2.2557202635914337) node[anchor=north west] {$f_2$};
\draw (-1.5291509060955513,2.3955983525535425) node[anchor=north west] {$e_2$};
\draw (-2.4057202635914328,1.3791509060955522) node[anchor=north west] {$f'_1$};
\draw (-2.452346293245469,0.13889851729818795) node[anchor=north west] {$e_1$};
\draw (-2.2098909390444805,-1.1200042833607908) node[anchor=north west] {$f'_1$};
\draw (-1.0535654036243816,-2) node[anchor=north west] {$e'_2$};
\draw (-0.027792751235584022,-2.061850082372323) node[anchor=north west] {$f_2$};
\draw (1.0912319604612861,-2.099150906095552) node[anchor=north west] {$e'_2$};
\draw (2.107679406919276,-1.2225815485996707) node[anchor=north west] {$f_1$};
\end{tikzpicture}
\caption{Local picture around the conical point}\label{local pic}
\end{center}
\end{figure}

The local picture at $S$ in $St(2s-1)$ may be obtained by induction : start from the local picture in $St(2s-3)$, split $f_{s-1}$ into $f_{s-1}$ and $f'_{s-1}$, and insert, between  $f_{s-1}$ and $f'_{s-1}$, $f_s$, $e_s$, and $e'_s$, as shown in Figure \ref{insert}.
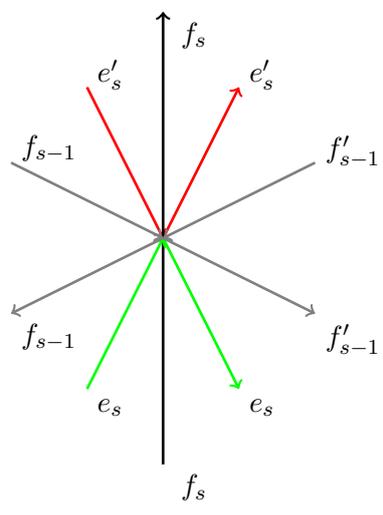
\begin{figure}[!h]
\begin{center}
\begin{tikzpicture}
\draw [->,line width=1pt,color=red] (-1,2) -- (0,0);
\draw [->,line width=1pt,color=red] (0,0) -- (1,2);
\draw (-1,2.5) node[anchor=north west] {$e'_s$};
\draw (1,2.5) node[anchor=north west] {$e'_s$};
\draw (-1,-2) node[anchor=north west] {$e_s$};
\draw (1,-2) node[anchor=north west] {$e_s$};
\draw [->,line width=1pt,color=black] (0,-3) -- (0,3);
\draw (0.1,3) node[anchor=north west] {$f_s$};
\draw (0.1,-3) node[anchor=north west] {$f_s$};
\draw [->,line width=1pt,color=green] (-1,-2) -- (0,0);
\draw [->,line width=1pt,color=green] (0,0) -- (1,-2);
\draw [->,line width=1pt, color=gray] (-2,1) -- (0,0);
\draw [->,line width=1pt, color=gray] (0,0) -- (-2,-1);
\draw (-2,1.5) node[anchor=north west] {$f_{s-1}$};
\draw (-2,-1) node[anchor=north west] {$f_{s-1}$};
\draw [->,line width=1pt, color=gray] (2,1) -- (0,0);
\draw [->,line width=1pt, color=gray] (0,0) -- (2,-1);
\draw (2,1.5) node[anchor=north west] {$f'_{s-1}$};
\draw (2,-1) node[anchor=north west] {$f'_{s-1}$};
\end{tikzpicture}
\caption{Insertion of $f_s$, $e_s$, $e'_s$}\label{insert}
\end{center}
\end{figure}

Since $e_1, e_2, f_1,  f_2$ do not meet anywhere but at $S$, the local picture yields the algebraic intersections between any two of $e_1, e_2, f_1,  f_2$, summed up in the following matrix: 
\[
\begin{array}{ccccc}
\mbox{Int} & e_1& f_1 & e_2 & f_2 \\
e_1            & 0     &   1   &  0   &  -1    \\
f_1            & -1    &   0   &  0   &  0  \\
e_2             & 0     &   0  & 0    &  1   \\
f_2             & 1    &  0    & -1   &  0 
\end{array}
\]
Let us call $\alpha_1, \ldots, \alpha_s$  (resp. $\beta_1, \ldots, \beta_s$) the closed, non-singular horizontal (resp. vertical)  geodesics depicted in Figure \ref{nonsing}.
\begin{figure} [h!]\label{alpha1beta1}
	\begin{center}
		
		\begin{tikzpicture}[scale=2]
		
		\draw (3,0) -- (5,0);
		\draw (3,1) -- (6,1);
		\draw (4,2) -- (6,2);
		\draw (5,3) -- (6,3);
		\draw (3,0) -- (3,1);
		\draw (4,0) -- (4,2);
		\draw (5,0) -- (5,3);
		\draw (6,1) -- (6,3);
		\draw (3,0) node {$\bullet$} ;
		\draw (4,0) node {$\bullet$} ;
		\draw (5,0) node {$\bullet$} ;
		\draw (3,1) node {$\bullet$} ;
		\draw (4,1) node {$\bullet$} ;
		\draw (5,1) node {$\bullet$} ;
		\draw (6,1) node {$\bullet$} ;
		\draw (4,2) node {$\bullet$} ;
		\draw (5,2) node {$\bullet$} ;
		\draw (6,2) node {$\bullet$} ;
		\draw (5,3) node {$\bullet$} ;
		\draw (6,3) node {$\bullet$} ;
		
		\draw [->, line width = 1pt, color=red](3,0.5) -- (5,0.5);
		\draw (3,0.7) node[anchor=north west, color=red] {$\alpha_1$};
		\draw [->, line width = 1pt, color=red](4,1.5) -- (6,1.5);
		\draw (4,1.7) node[anchor=north west, color=red] {$\alpha_2$};
		\draw [->, line width = 1pt, color=red](5,2.5) -- (6,2.5);
		\draw (5,2.7) node[anchor=north west, color=red] {$\alpha_3$};
		\draw [->,  line width = 1pt, color=green](3.5,0) -- (3.5,1);
		\draw (3.2,0) node[anchor=north west, color=green] {$\beta_1$};
		\draw [->,  line width = 1pt, color=green](4.5,0) -- (4.5,2);
		\draw (4.2,0) node[anchor=north west, color=green] {$\beta_2$};
		\draw [->,  line width = 1pt, color=green](5.5,1) -- (5.5,3);
		\draw (5.2,1) node[anchor=north west, color=green] {$\beta_3$};
		
		\end{tikzpicture}
		\caption{The non-singular geodesics $\alpha_i$, $\beta_i$, $i=1,2,3$}  \label{nonsing}
	\end{center}
\end{figure}
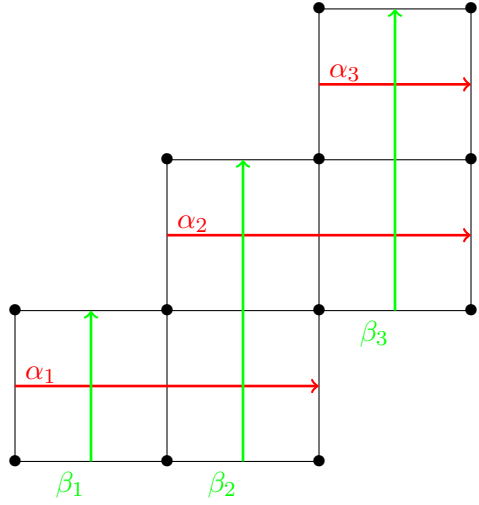
\begin{lemma}
	For $ i,j=1, \ldots, s$ we have :
	$$ \mbox{Int}(e_{i},f_{j})= 
	\left\{
	\begin{array}{ccc}
	(-1)^{j-i} & \mbox{if}  & j\geq  i \\
	 0 & \mbox{if}  & j< i
	\end{array}
	\right.
	$$
	and $$ \mbox{Int}(e_{i},e_{j})=\mbox{Int}(f_{i},f_{j})=0$$
	
\end{lemma}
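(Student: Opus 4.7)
My plan is to reduce the computation of $\Int(e_i, f_j)$, $\Int(e_i, e_j)$, and $\Int(f_i, f_j)$ to intersections among the non-singular cylinder core curves $\alpha_k$ and $\beta_l$, where they can be counted by direct inspection of the staircase template. Since all of the $e_i$ and $f_j$ are loops based at the conical point $S$ which meet nowhere else, their intersection numbers depend only on homology classes, so the entire argument can take place in $H_1(St(2s-1),\Z)$.

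The first step is to read off from the cylinder decomposition the homology relations
\[
[\alpha_i] = [e_i] + [e_{i+1}] \quad (1 \leq i < s), \qquad [\alpha_s] = [e_s],
\]
together with the analogous $[\beta_j] = [f_{j-1}] + [f_j]$ for $j > 1$ and $[\beta_1]=[f_1]$. Each $\alpha_i$ is homologous to the bottom boundary of the horizontal cylinder comprising row $i$, which in the template concatenates the relevant labelled horizontal edges; an intermediate check that $[e'_j]=[e_j]$ and $[f'_j]=[f_j]$ comes from the fact that each unit square has boundary trivial in homology. Inverting these triangular systems gives
\[
[e_i] = \sum_{k=i}^{s}(-1)^{k-i}[\alpha_k], \qquad [f_j] = \sum_{l=1}^{j}(-1)^{j-l}[\beta_l].
\]

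Next I would compute $\Int(\alpha_k,\beta_l)$ directly. Since $\alpha_k$ lies in row $k$ (occupying columns $k$ and $k+1$, with column $s+1$ absent) and $\beta_l$ lies in column $l$ (occupying rows $l-1$ and $l$, with row $0$ absent), the two cylinders share a single square precisely when $l \in \{k,k+1\}$, giving exactly one transverse intersection of sign $+1$ (since $\alpha_k$ points east and $\beta_l$ points north, so $\beta_l$ crosses $\alpha_k$ from right to left in the paper's convention). In all other cases $\alpha_k$ and $\beta_l$ are disjoint. Moreover $\Int(\alpha_k,\alpha_l)=\Int(\beta_k,\beta_l)=0$, as the cores of distinct parallel cylinders are disjoint.

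By bilinearity, $\Int(e_i,f_j)=\sum_{k,l}(-1)^{(k-i)+(j-l)}\Int(\alpha_k,\beta_l)$. When $j<i$, no admissible index $k\geq i$ satisfies $k\leq j$ or $k+1\leq j$, so the sum vanishes. When $j\geq i$, the $j-i+1$ contributions with $l=k$ each equal $(-1)^{j-i}$ while the $j-i$ contributions with $l=k+1$ each equal $-(-1)^{j-i}$, telescoping to $(-1)^{j-i}$. The identities $\Int(e_i,e_j)=\Int(f_i,f_j)=0$ follow at once from the vanishing of the corresponding cylinder-core intersections. The main difficulty is the first step: getting the cylinder decomposition exactly right and orienting the boundary cycles consistently through the staircase gluings; after that everything reduces to a routine telescoping computation.
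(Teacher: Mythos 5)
Your proof is correct and follows essentially the same route as the paper: both arguments hinge on the triangular homology relations $[\alpha_k]=[e_k]+[e_{k+1}]$, $[\alpha_s]=[e_s]$, $[\beta_l]=[f_{l-1}]+[f_l]$, $[\beta_1]=[f_1]$, together with an easy geometric count of intersections with the cylinder core curves. The only (cosmetic) difference is that you invert the system explicitly and telescope using the core-core intersections $\mbox{Int}(\alpha_k,\beta_l)$, whereas the paper keeps $e_i$ fixed, computes $\mbox{Int}(e_i,\beta_j)=\delta_{ij}$, and solves the same triangular system by induction on $j$.
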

\begin{proof}
Observe that the closed, non-singular vertical geodesic $\beta_1$ is homotopic to $f_{1} $ and for each $j= 2,\ldots,s $, $\beta_j$ are homotopic to $f_{j}+f_{j-1}$. We observe that
$$ \mbox{Int}(e_{i},\beta_{j})= 
\left\{
\begin{array}{ccc}
1 & \mbox{if}  & i=j \\
0 & \mbox{if}  & i\neq j
\end{array}
\right.
$$
Then we have :\\
For $j=1$, $\mbox{Int}(e_{1},f_{1})=1$ and for $i>1$,  $\mbox{Int}(e_{i},f_{1})=0$.\\
For $ j\geq 2$ we have three cases:\\
1) if $ i>j$ we have $\mbox{Int}(e_{i},\beta_{j})=0$ which implies $ \mbox{Int}(e_{i},f_{j})=-\mbox{Int}(e_{i},f_{j-1})$, by induction we deduce $$ \mbox{Int}(e_{i},f_{j})=(-1)^{i-j}\mbox{Int}(e_{i},f_{1})=0$$ 
2) if $i=j$ we have $$ \mbox{Int}(e_{i},f_{i})+\mbox{Int}(e_{i},f_{i-1})=1$$
but by the first case $ \mbox{Int}(e_{i},f_{i-1})=0$, so 
$$ \mbox{Int}(e_{i},f_{i})=1$$\\
3) if $ i<j$  we have $$ \mbox{Int}(e_{i},f_{j})=-\mbox{Int}(e_{i},f_{j-1})$$
By induction and using the second case, we obtain 
\begin{align*}
\mbox{Int}(e_{i},f_{j})&=(-1)^{j-i}\mbox{Int}(e_{i},f_{i})\\
&=(-1)^{j-i}.
\end{align*}
Observe that the closed, non-singular horizontal geodesic $\alpha_s,$ is homotopic to $e_{s} $ and for each $j=1,\ldots,s-1 $ , $\alpha_j$ are homotopic to $e_{j}+e_{j+1}$. Then for  $i=1,\ldots,s$, $\mbox{Int}(e_{i},e_{s})=0 $
 and for $i=1,\ldots,s$, $j=1,\ldots,s-1 $
 $\mbox{Int}(e_{i},\alpha_{j})=0 $. Finally, we obtain 
\begin{align*}
\mbox{Int}(e_{i},e_{j})&=-\mbox{Int}(e_{i},e_{j+1})\\ 
&= (-1)^{s-j}\mbox{Int}(e_{i},e_{s})=0.
\end{align*}
In an analogous way we have  $\mbox{Int}(f_{i},f_{j})=0$.
\end{proof}
For $s >2$, the intersection matrix is, in the basis $e_1, f_1, \ldots, e_s, f_s$  (in this example $s=4$):
\[
\left(
\begin{array}{cc|cc|cc|cc}

    0     &   1   &  0   &  -1 & 0 & 1 & 0 &-1  \\
    -1    &   0   &  0   &  0  & 0 & 0 &  0 & 0 \\
    \hline
     0    &   0   & 0    &  1  & 0 & -1 & 0 & 1  \\
    1    &   0    & -1   &  0  & 0 & 0 & 0 & 0 \\
    \hline
     0    &  0    & 0    &  0  & 0 & 1 & 0 & -1 \\
      -1  &  0    & 1    &  0  & -1& 0 & 0 & 0 \\
      \hline
    0    &  0    & 0    &  0  & 0 & 0 & 0 & 1 \\
    1    &  0    & -1  &  0  & 1 & 0 & -1 & 0 \\
\end{array}
\right).
\]
Since the determinant of the intersection matrix is not zero, we get for free the fact that $e_i, f_i$, $i=1,\ldots, s$ form a basis of $H_1 (St(2s-1), \R)$. From now on we always refer to homology classes in $H_1 (St(2s-1), \R)$ by their coordinates $\epsilon_1, \dots, \epsilon_s, \phi_1, \ldots, \phi_s$ in the basis $e_i, f_i$.

Observe that the homology classes of $\alpha_1, \ldots, \alpha_{s-1}, \alpha_s$ are $\left[ e_1\right]+ \left[ e_2 \right], \ldots, \left[ e_{s-1}\right]+ \left[ e_s \right], \left[ e_s \right]$, while the homology classes of 
$\beta_1, \beta_2, \ldots,  \beta_s$ are $ \left[ f_1 \right], \left[ f_1\right]+ \left[ f_2 \right], \ldots, \left[ f_{s-1}\right]+ \left[ f_s \right]$.

Thus, using the intersection matrix, we see that, for any homology class $h$ with coordinates $\epsilon_1, \dots, \epsilon_s, \phi_1, \ldots, \phi_s$, we have 
\begin{align*}
\epsilon_i & = \mbox{Int}(h, \left[ \beta_i \right])    \\
\phi_i & = -\mbox{Int}(h, \left[ \alpha_i \right]).   
\end{align*}

\subsection{A short excursion into the Teichm\H{u}ller space of flat tori}\label{A short excursion into the Teichmüller space of flat tori}
Recall that the Teichm\H{u}ller space of flat tori is the hyperbolic plane ; one way to see that is to fix a homology class of simple closed curves, then   every flat torus is biholomorphic to 
\[
T(x,y) = \R^2 /  \Z(1,0)\oplus \Z(x,y)
\]
where $(x,y) \in \R \times \R_+^*$ and the biholomorphism sends a simple closed curve in the homology class $h$ to the image in $T(x,y)$ of $(0,1)$. 


Now take $p,q,p',q' \in \Z$ such that $p \wedge q =  p' \wedge q' = 1$, set $r = p/q, r'=p'/q'$, $r$ (resp. $r'$) being understood as the point at infinity if $q=0$ (resp. $q'=0$). Let $\gamma_{r,r'}$ be the hyperbolic geodesic with endpoints $r$ and $r'$. Recall that $\gamma_{r,r'}$ is the locus, in the Teichm\H{u}ller space, of the flat tori in which the closed geodesics with respective directions $(p,q)$ and $(p',q')$ are orthogonal. 

Denote by $d_{r,r'} \  : \  \Hy^2 \longrightarrow \R$ the distance function to the geodesic $\gamma_{r,r'}$. Call $E_{r,r'}(d)$ the $d$-level set of $d_{r,r'}$, for any $d \geq 0$. Pick $(x,y)$ in $\Hy^2$, let 
$d= d_{r,r'}(x,y)$, and let   $\theta_{r,r'}(x,y)$ be the angle between $E_{r,r'}(d)$ and $\gamma_{r,r'}$ (see Figure \ref{banana}).

\begin{figure}
\begin{center}
\begin{tikzpicture}[line cap=round,line join=round,>=triangle 45,x=1cm,y=1cm, scale=1.8]
\clip(-3,0) rectangle (2.6,4);
\draw [shift={(-2,0)},line width=1pt,fill=red,fill opacity=0.10000000149011612] (0,0) -- (53.13010235415601:0.23134810951760104) arc (53.13010235415601:90:0.23134810951760104) -- cycle;
\draw [shift={(-2,0)},line width=1pt,fill=red,fill opacity=0.10000000149011612] (0,0) -- (90:0.23134810951760104) arc (90:126.86989764584399:0.23134810951760104) -- cycle;
\draw [shift={(0,0)},line width=1pt,color=black]  plot[domain=0:3.141592653589793,variable=\t]({1*2*cos(\t r)+0*2*sin(\t r)},{0*2*cos(\t r)+1*2*sin(\t r)});
\draw [shift={(0,1.5)},line width=1pt,color=red]  plot[domain=-0.6435011087932843:3.7850937623830774,variable=\t]({1*2.5*cos(\t r)+0*2.5*sin(\t r)},{0*2.5*cos(\t r)+1*2.5*sin(\t r)});
\draw [shift={(0,-1.5)},line width=1pt,color=red]  plot[domain=0.6435011087932844:2.498091544796509,variable=\t]({1*2.5*cos(\t r)+0*2.5*sin(\t r)},{0*2.5*cos(\t r)+1*2.5*sin(\t r)});
\draw [->,line width=1pt,color=red] (-2,0) -- (-1.4084909621903532,0.7886787170795292);
\draw [->,line width=1pt,color=black] (-2,0) -- (-2,1);
\draw [->,line width=1pt,color=red] (-2,0) -- (-2.6164052672750966,0.8218736897001288);
\draw [line width=2pt,color=wrwrwr,domain=-3.0190928292046935:2.895707170795306] plot(\x,{(-0-0*\x)/4});
\draw (-2.001161147327249,0) node[anchor=north west] {$r=\frac{p}{q}$};
\draw (1.9857379400260755,0) node[anchor=north west] {$r'=\frac{p'}{q'}$};
\draw (-0.4973984354628422,2.5) node[anchor=north west] {$\gamma_{r,r'}$};
\draw (-0.3971475880052151,1.0268959582790067) node[anchor=north west] {$E_{r,r'} (d)$};
\draw (-0.28918513689700126,3.9727285528031264) node[anchor=north west] {$E_{r,r'} (d)$};
\begin{scriptsize}
\draw[color=black] (-1.8,0.4) node {$\theta$};
\draw[color=black] (-2.1,0.4) node {$\theta$};
\end{scriptsize}
\end{tikzpicture}
\caption{A banana neighborhood} \label{banana}
\end{center}
\end{figure}
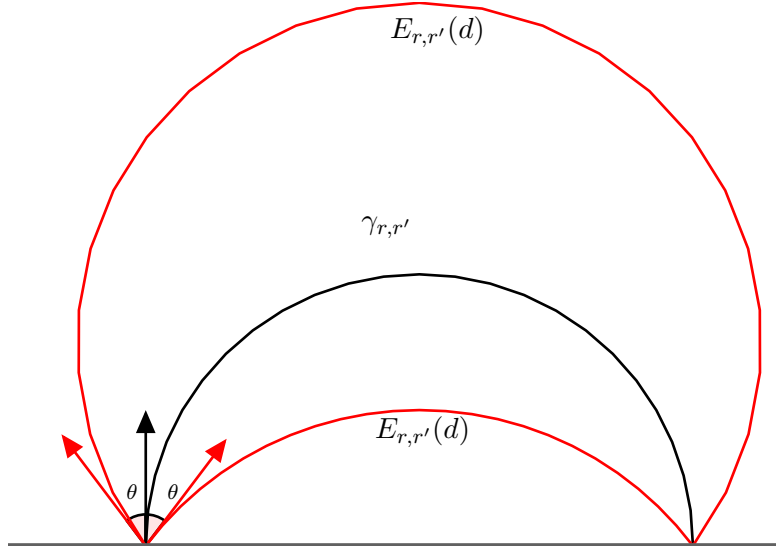
Denote $l_{r}(x,y)$ the length, in the flat torus $T(x,y)$, of the closed geodesics with direction $(p,q)$. 
Set 
\[
K_{r,r'}(x,y) = \mbox{Vol}(T(x,y)) \frac{pq'-p'q}{l_{r}(x,y)l_{r'}(x,y)}.
\]
This definition is tailored so 
\[
\mbox{KVol}(T(x,y)) = \sup_{r,r'} K_{r,r'}(x,y) .
\]
\begin{lemma}\label{sl2}
For any $g \in SL_2 (\Z)$, we have
\[
K_{r,r'}(g(x,y))= K_{g(r),g(r')}(x,y)).
\]
\end{lemma}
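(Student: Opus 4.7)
The plan is to prove the lemma by a direct computation in the complex-analytic picture. Identifying $(x,y) \in \Hy^2$ with $\tau = x+iy$, the area formula $\Vol(T(x,y)) = \Im(\tau)$ and the fact that the lift of a direction-$(p,q)$ geodesic is the complex vector $p+q\tau$ turn the definition of $K_{r,r'}$ into
\[
K_{r,r'}(\tau) = \frac{\Im(\tau)\,(pq' - p'q)}{|p+q\tau|\,|p'+q'\tau|}.
\]
This rewriting makes the whole lemma a one-variable calculation.

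For $g = \begin{pmatrix}a&b\\c&d\end{pmatrix} \in SL_2(\Z)$, I would interpret the action on the Teichm\H{u}ller space of flat tori as the Möbius action $g\tau = (d\tau+b)/(c\tau+a)$ coming from change of basis of the lattice (for the generators $T$ and $R$ of the Veech group this coincides with the standard Möbius action, as $T$ and $R$ are fixed by the swap of their diagonal entries). Then $\Im(g\tau) = \Im(\tau)/|c\tau+a|^2$, and a short manipulation yields
\[
p + q\,g\tau = \frac{(pa+qb) + (pc+qd)\tau}{c\tau+a}.
\]
Setting $P = pa+qb$, $Q = pc+qd$ and analogously $(P',Q')$, the numerator is $P+Q\tau$, which is the lift in $T(\tau)$ of the direction $P/Q$; the identity $\det g = 1$ is used twice, to check $\gcd(P,Q) = 1$ and to verify the key determinant relation $PQ' - P'Q = (ad-bc)(pq'-p'q) = pq'-p'q$, i.e.\ that the algebraic intersection is preserved by the substitution.

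Putting it all together, the four factors of $|c\tau+a|^{\pm 1}$ (two from the length factors and a compensating $|c\tau+a|^{-2}$ from the imaginary part) cancel, and one reads off
\[
K_{r,r'}(g\tau) = \frac{\Im(\tau)\,(PQ' - P'Q)}{|P+Q\tau|\,|P'+Q'\tau|} = K_{P/Q,\,P'/Q'}(\tau) = K_{g(r),\,g(r')}(\tau),
\]
the last equality because $P/Q = (ar+b)/(cr+d)$ is the standard Möbius action of $g$ on $r$. I expect the only genuinely delicate point in this plan to be the bookkeeping match between the action on $\Hy^2$ and the action on the boundary directions; once those conventions are pinned down, the rest is a cancellation of $|c\tau+a|$ factors and presents no real mathematical obstacle.
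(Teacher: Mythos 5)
Your proof is correct and amounts to an explicit coordinate verification of exactly the three facts that the paper's (very terse) proof asserts: invariance of $pq'-p'q$, the behaviour of the area, and the transformation rule $l_{r}(g(x,y))=l_{g(r)}(x,y)$ for the lengths, so it is essentially the same argument carried out in full. The convention issue you flag is real --- the paper never pins down its action of $SL_2(\Z)$ on $\Hy^2$, and is itself loose about whether the direction $(p,q)$ degenerates at $p/q$ or $-p/q$ --- but your choices are internally consistent, and the cancellation of the $|c\tau+a|$ factors goes through as you describe.
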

\begin{proof}
First, $pq'-p'q$ and $\mbox{Vol}(T(x,y))$ are invariant under the orientation- and volume-preserving diffeomorphism $g$. Second, $l_{r}(g(x,y))=l_{g(r)}(x,y)$.
\end{proof}
\begin{lemma}
For any $(x,y) \in \Hy^2$ and $p,q,p',q' \in \Z$ such that $p \wedge q =  p' \wedge q' = 1$, we have, setting $r = p/q, r'=p'/q'$, 
\[
K_{r,r'}(x,y)=\cos \theta_{r,r'}(x,y).
\]
\end{lemma}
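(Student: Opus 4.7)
The plan is to exploit the conformality of hyperbolic isometries and reduce to the easy case $(r,r')=(0,\infty)$, where $\gamma_{r,r'}$ is the positive imaginary axis and the equidistants are Euclidean rays from the origin. Concretely, I would consider the M\"obius transformation
\[
\mu(z) = \frac{z-r}{z-r'},
\]
which (after rescaling to unit determinant) is a hyperbolic isometry of $\Hy^2$, maps $r\mapsto 0$ and $r'\mapsto\infty$, and therefore sends $\gamma_{r,r'}$ to $i\R_+$. Since the isometries $z\mapsto\lambda z$, $\lambda>0$, preserve $i\R_+$ and act transitively on each Euclidean ray from the origin, the equidistants $E_{0,\infty}(d)$ are exactly these rays. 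Thus $\mu(E_{r,r'}(d))$ is the ray from $0$ through $\mu(z)$.

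By conformality of $\mu$, the angle $\theta_{r,r'}(x,y)$ at the boundary point $r$ between $\gamma_{r,r'}$ and $E_{r,r'}(d)$ equals the Euclidean angle at $0$ between $i\R_+$ and this ray, giving
\[
\cos\theta_{r,r'}(x,y) = \frac{\mbox{Im}\,\mu(z)}{|\mu(z)|}.
\]
A routine calculation yields $\mbox{Im}\,\mu(z) = (r-r')y/|z-r'|^2$ and $|\mu(z)| = |z-r|/|z-r'|$, so
\[
\cos\theta_{r,r'}(x,y) = \frac{|r-r'|\,y}{|z-r|\,|z-r'|}.
\]

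To match this with $K_{r,r'}$, I would write $r=p/q$, $r'=p'/q'$ with $\gcd(p,q)=\gcd(p',q')=1$, and use the elementary identity $l_r(x,y) = |q|\,|z-r|$ (i.e.\ $|qz-p| = \sqrt{(qx-p)^2+(qy)^2}$ is exactly the length of the closed geodesic of direction $(p,q)$ in $T(x,y)$), together with $|r-r'|=|pq'-p'q|/|qq'|$ and $\mbox{Vol}(T(x,y))=y$. Substituting, the factors of $|qq'|$ cancel and one recovers $\cos\theta_{r,r'}(x,y) = y\,|pq'-p'q|/(l_r l_{r'}) = K_{r,r'}(x,y)$. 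I do not expect a genuine obstacle: the only delicate point is the bookkeeping of the sign convention relating a primitive $(p,q)$ to the boundary point $p/q$ (so that $l_r = |q|\,|z-r|$ really holds), and the case in which $r$ or $r'$ equals $\infty$ is handled by the variant M\"obius map $\mu(z)=1/(r'-z)$ (or its mirror), for which the same cancellation goes through.
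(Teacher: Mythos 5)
Your proof is correct, but it takes a different route from the paper's. The paper first invokes its Lemma \ref{sl2} (equivariance of $K_{r,r'}$ under $SL_2(\Z)$) to reduce to the arithmetic normal form $(p',q')=(1,0)$, computes $K_{r,r'}=qy/\sqrt{(p+qx)^2+(qy)^2}$ explicitly in that case, and reads off $\cos\theta$ from polar coordinates centred at the foot of the (now vertical) geodesic. You instead normalize purely geometrically: a real M\"obius map sends $\gamma_{r,r'}$ to $i\R_+$, conformality gives the closed formula $\cos\theta_{r,r'}(z)=|r-r'|\,y/(|z-r|\,|z-r'|)$ for arbitrary real endpoints, and the identification with $K_{r,r'}$ is then a direct algebraic cancellation. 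Your argument is more self-contained (it does not need Lemma \ref{sl2} or the $SL_2(\Z)$ action on directions) and yields a general formula for the angle; the paper's reduction buys a shorter computation at the cost of quoting the equivariance lemma. One caveat on the ``delicate point'' you flag: with the paper's conventions, the closed geodesic of direction $(p,q)$ has length $l_r(x,y)=\sqrt{(p+qx)^2+(qy)^2}=|q|\,|z+r|$, not $|q|\,|z-r|$, and correspondingly the orthogonality locus is the geodesic with endpoints $-p/q$ and $-p'/q'$ (this is visible in the paper's own Figure \ref{alphatheta}, where the foot of the vertical geodesic is labelled $-p/q$, despite the text saying the endpoints are $p/q$ and $p'/q'$). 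So your identity must be applied with $r$ replaced by $-p/q$ throughout (or the endpoints of $\gamma_{r,r'}$ taken to be $-r,-r'$); since $|(-r)-(-r')|=|r-r'|$, the final cancellation is unaffected and your proof goes through once the convention is fixed consistently. You should also take absolute values (or order $r>r'$) so that the M\"obius matrix has positive determinant and $\cos\theta\geq 0$ matches the sign of $pq'-p'q$; the paper is equally cavalier about this.
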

\begin{proof}
By Lemma \ref{sl2}, acting by some element of $SL_2 (\Z)$ if we need to, we may assume that $p'=1, q'=0$, that is, $\gamma_{r,r'}$ is vertical. Then $l_{r'}(x,y)=1$, $l_{r}(x,y)=\sqrt{(p+qx)^2 +(qy)^2}$, $pq'-p'q=q$, and $\mbox{Vol}(T(x,y)) =y$, so 
\[
K_{r,r'}(g(x,y))=\frac{qy}{\sqrt{(p+qx)^2 +(qy)^2}}.
\]
Take radial coordinates $(\rho, \alpha)$ with origin at $-r$, so $x+p/q = \rho \cos \alpha$ and $y= \rho \sin \alpha$. Then 
\[
K_{r,r'}(g(x,y))=\frac{\rho \sin \alpha}{\rho} = \sin \alpha.
\]
Now observe on Figure \ref{alphatheta}  that $\alpha=\pi /2 - \theta_{r,r'} (x,y)$. This concludes the proof. 
\end{proof}
\begin{figure}
\begin{center}
\begin{tikzpicture}[line cap=round,line join=round,>=triangle 45,x=1cm,y=1cm]
\clip(-4,-1) rectangle (5,5);
\draw [shift={(0,0)},line width=1pt,fill=red,fill opacity=0.10000000149011612] (0,0) -- (63.43494882292201:0.6442920469361146) arc (63.43494882292201:90:0.6442920469361146) -- cycle;
\draw [shift={(0,0)},line width=1pt,fill=black,fill opacity=0.10000000149011612] (0,0) -- (0:0.6442920469361146) arc (0:63.43494882292201:0.6442920469361146) -- cycle;
\draw [line width=1pt,color=black,domain=-8.2362:8.2362] plot(\x,{(-0-0*\x)/7});
\draw [line width=1pt,color=black] (0,0) -- (0,9.618354106910038);
\draw [line width=1pt,color=red,domain=0:8.236199999999998] plot(\x,{(-0--6*\x)/3});
\draw [line width=1pt,color=red,domain=-8.236199999999998:0] plot(\x,{(-0--6*\x)/-3});
\draw (-1.8,4.5) node[anchor=north west] {$(x,y)$};
\draw (-0.5,0) node[anchor=north west] {$-\frac{p}{q}$};
\draw (0,4) node[anchor=north west] {$\gamma_{r,r'}$};
\draw (0.03221460234680573,2.3808067796610173) node[anchor=north west] {$\theta_{r,r'} (x,y)$};
\draw (0.6,0.8) node[anchor=north west] {$\alpha$};
\draw (0.010738200782268576,5.387502998696219) node[anchor=north west] {$\gamma_{r,r'}$};
\draw (-3.5,3.5) node[anchor=north west] {$E_{r,r'} (x,y)$};
\draw (1.9865671447196867,4.313682920469361) node[anchor=north west] {$E_{r,r'} (x,y)$};
\begin{scriptsize}
\draw [fill=rvwvcq] (-2,4) circle (2.5pt);

\end{scriptsize}
\end{tikzpicture}
\caption{$\alpha=\pi /2 - \theta_{r,r'} (x,y)$}\label{alphatheta}
\end{center}
\end{figure}
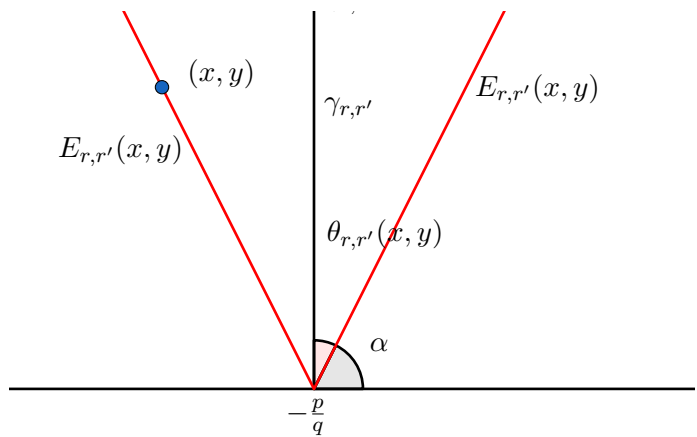

\subsection{Saddle connections and intersection}\label{subsecSaddle connections and intersection}
Recall that a saddle connection is a closed geodesic which contains a cone point. Saddle connections on surfaces in the Teichm\H{u}ller disk of $St(2s-1)$ have a special property : 
\begin{lemma}\label{1to1}
If $X \in \mathcal{T}_s$ and $\gamma$ is a saddle connection on $X$, then the projection $\Pi : X \longrightarrow \T^2$, restricted to $\gamma$,  is 1-to-1.
\end{lemma}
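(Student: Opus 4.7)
The plan is to exploit the fact that the ramified cover $\Pi : X \longrightarrow T(x,y)$ has a single preimage above $\Pi(S)$. Once this is established, simplicity of the saddle connection will force its image on the torus to be a primitive closed geodesic, from which injectivity of $\Pi|_\gamma$ follows immediately.

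First I would verify that $\Pi^{-1}(\Pi(S)) = \{S\}$. The cover $\Pi$ has degree $2s-1$, it is unramified off $S$, the genus of $X$ is $s$, and the torus has genus $1$. Riemann--Hurwitz then reads
\[
2s-2 \;=\; \sum_i (e_i - 1),
\]
summed over the preimages of $\Pi(S)$, whose ramification indices $e_i$ add up to the degree $2s-1$. With $r$ preimages one gets $2s - 1 - r = 2s - 2$, so $r=1$ and the unique preimage has ramification index $2s-1$.

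Next I would analyse $\Pi \circ \gamma$. Parametrize $\gamma$ by arclength as $\gamma : \R/L\Z \to X$; then $\Pi \circ \gamma$ is a geodesic loop at $\Pi(S)$ in $T(x,y)$ of direction $(p,q)$ with $\gcd(p,q)=1$. Lifting to the universal cover $\R^2$, such a loop corresponds to a lattice vector of the form $\lambda\bigl(p(1,0) + q(x,y)\bigr)$ with $\lambda$ a positive integer; hence the length satisfies $L = k\, l_r(x,y)$ for some integer $k \geq 1$, and $\Pi \circ \gamma$ hits $\Pi(S)$ at exactly $k$ instants in $\R/L\Z$, namely $0, l_r(x,y), \ldots, (k-1)\, l_r(x,y)$.

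Finally I would combine the two observations. The set of instants at which $\Pi \circ \gamma$ visits $\Pi(S)$ equals $\gamma^{-1}(\Pi^{-1}(\Pi(S))) = \gamma^{-1}(S)$, which contains a single element because $\gamma$ is a simple loop at $S$. This forces $k=1$, so $\Pi \circ \gamma$ traces the primitive closed geodesic of direction $(p,q)$ on $T(x,y)$, which is a simple closed curve, and $\Pi|_\gamma$ is therefore injective. The only non-trivial ingredient is the Riemann--Hurwitz count; the rest is arclength bookkeeping on the torus.
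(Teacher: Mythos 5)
Your proof is correct and follows essentially the same route as the paper's: both arguments pivot on the fact that the cone point $S$ is the unique preimage of $\Pi(S)$, and on the observation that if $\Pi\circ\gamma$ were not injective it would have to wrap around a primitive closed geodesic of $\T^2$ more than once, forcing an extra passage through $\Pi(S)$ and hence through $S$, contradicting the definition of a saddle connection. The only real difference is that you justify the single-preimage property by a Riemann--Hurwitz count, whereas the paper simply recalls it as a known feature of the surfaces in $\mathcal{T}_s$; that addition is welcome but does not change the substance of the argument.
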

\begin{proof}
Parametrize $\gamma$ at unit speed by $\left[0,l(\gamma) \right]$, so that $\gamma(0)=\gamma(l(\gamma ))=S$, $S$ being the cone point of $X$. Assume $\Pi(\gamma(t_1))=\Pi(\gamma(t_2))$ for $t_1, t_2 \in \left[0,l(\gamma) \right]$, with $t_1 \neq t_2$. Then $\Pi ( \gamma)( \left[ t_1,t_2 \right])$ is a closed subarc of $\Pi ( \gamma)(\left[0,l(\gamma) \right])$, parametrized at unit speed, so it must contain all of $\Pi ( \gamma)(\left[0,l(\gamma) \right])$. Therefore there exists $t_3 \in  \left[ t_1,t_2 \right]$ such that $\Pi (\gamma) (t_3)=(0,0)= \Pi (S)$. Now recall that $S$ is the only pre-image of $(0,0)$ (this is where surfaces in 
$\mathcal{T}_s$ are special), and the only pre-images of $S$ by the chosen parametrization of $\gamma$ are $0$ and $l(\gamma)$, so $t_3=0$ or  $t_3=l(\gamma)$. Then $\Pi (\gamma) (t_1)=\Pi (\gamma) (t_2)=(0,0)$, whence $\gamma (t_1)= \gamma (t_2)=S$, which proves that $t_1=0, t_2=l(\gamma)$, and the lemma. 
\end{proof}

The conical point is the only pre-image of the ramification point, unlike the case of the surface $L(n,n)$
(see \cite{CKM}), so by Lemma \ref{1to1} a closed geodesic of direction $(p,q)$ projects 1-to-1 to a closed geodesic in the flat torus. Therefore a saddle connection of direction $(p,q)$ is exactly $\sqrt{(p+qx)^2+(qy)^2}$-long. For future reference we state this fact as a lemma.
\begin{lemma}\label{longueur}
Given $(x,y) \in \Hy^2$ and $(p,q) \in \Z^2$ with $p \wedge q =  1$, for any saddle connection $\alpha$ with direction $(p,q)$ in the translation surface $X_s(x,y)$, we have, using the notation of 
Subsection \ref{A short excursion into the Teichmüller space of flat tori}, $l(\alpha)= l_{r}(x,y)$.
\end{lemma}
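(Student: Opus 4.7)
The plan is to combine Lemma \ref{1to1} with the fact that $\Pi : X_s(x,y) \longrightarrow T(x,y)$ is a Riemannian covering, so that the length of $\alpha$ equals the length of its image $\Pi(\alpha)$, which is a specific straight closed geodesic in the torus whose length is easy to compute.

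First I would record that since $\alpha$ is a saddle connection, both of its endpoints are the unique cone point $S$, and since $\alpha$ has direction $(p,q)$, its projection $\Pi(\alpha)$ is a closed geodesic in $T(x,y)$ starting and ending at $\Pi(S)=(0,0)$, with lifts to $\R^2$ parallel to $p(1,0)+q(x,y)$. By Lemma \ref{1to1}, $\Pi$ restricted to $\alpha$ is injective, so $\Pi(\alpha)$ is a \emph{simple} closed geodesic through the origin of the torus in the direction $(p,q)$.

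Next, since $p\wedge q=1$, the unique simple closed geodesic of direction $(p,q)$ through $(0,0)$ in $T(x,y)$ is the image in the quotient of the line segment from $0$ to $p(1,0)+q(x,y)$. Its length is therefore
\[
\sqrt{(p+qx)^2+(qy)^2}=l_r(x,y).
\]
This identifies $\Pi(\alpha)$ with the standard direction-$(p,q)$ simple closed geodesic of the torus.

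Finally, since $\Pi$ is a Riemannian branched cover with branching only over $(0,0)$, it is a local isometry off a discrete set, hence arclength along any piecewise smooth curve is preserved. Because $\Pi|_\alpha$ is a bijection onto $\Pi(\alpha)$ by Lemma \ref{1to1}, we conclude
\[
l(\alpha)=l(\Pi(\alpha))=l_r(x,y),
\]
which proves the lemma. There is no real obstacle; the only point that requires care is verifying that $\Pi(\alpha)$ coincides with the standard direction-$(p,q)$ simple closed geodesic through the origin, which is where injectivity from Lemma \ref{1to1} and the fact that $S$ is the unique preimage of $(0,0)$ are both used.
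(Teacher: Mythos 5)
Your argument is correct and is essentially the paper's own: the authors derive the lemma from Lemma \ref{1to1} in the paragraph just before the statement, noting that the saddle connection projects injectively to a closed geodesic of direction $(p,q)$ in the torus and that the covering is Riemannian, hence $l(\alpha)=\sqrt{(p+qx)^2+(qy)^2}=l_r(x,y)$. Your write-up just spells out the same steps in more detail.
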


Here is why Subsection \ref{A short excursion into the Teichmüller space of flat tori} is relevant to the determination of KVol in $\mathcal{T}_s$. For $(x,y) \in \Hy^2$, and for $p,q,p',q' \in \Z$ such that $p \wedge q =  p' \wedge q' = 1$, setting $r = p/q, r'=p'/q'$, and calling $\alpha_i, i=1, \ldots,2s-1$ (resp. $\beta_i, i=1, \ldots,2s-1$) the saddle connections with direction $(p,q)$ (resp. $(p',q')$) in the translation surface $X_s(x,y)$, we have $l(\alpha_i)= l_{r}(x,y)$ and $l(\beta_i)= l_{r'}(x,y)$ for $i=1,\ldots,2s-1$, and $\mbox{Vol} (X_s(x,y)) = (2s-1) \mbox{Vol} (T(x,y)) $, since $X_s(x,y)$ is a (2s-1)-fold ramified Riemannian cover of $T(x,y)$.  Hence 
\begin{align}
\mbox{KVol}(X_s(x,y)) &\geq \sup_{r \neq r'} \sup_{i,j} \mbox{Vol} (X_s(x,y))\frac{\mbox{Int}(\alpha_i, \beta_j)}{l(\alpha_i)l(\beta_j)}\\
&= (2s-1) \sup_{r \neq r'} \sup_{i,j}  \frac{\mbox{Int}(\alpha_i, \beta_j)}{pq'-p'q} K_{r,r'}(x,y). \label{etoile}
\end{align}
The reason why there is a $\geq$ instead of $=$ in the inequality above is that there could be saddle connections with the same direction, i.e. $r=r'$, and non-zero intersection, in which case $pq'-p'q=0$ and the formula above does not apply ; more on that in subsection \ref{Irr'}. For the time being, we set
\[
I_{r,r'}(x,y)= \sup_{i,j}  \frac{\mbox{Int}(\alpha_i, \beta_j)}{pq'-p'q}
\]
and study $I_{r,r'}(x,y)$ as a function of $r$ and $r'$. Note that actually $I_{r,r'}(x,y)$ does not depend on $(x,y)$,  so from now on we shall denote it $I_{r,r'}$. 
\subsubsection{Intersections and $I_{r,r'}$}\label{Irr'}
Recall from \cite{HL} that the action on $\Z^2$ of the Veech group $\Gamma$ of $St(2s-1)$  has two orbits, that of $(1,0)$, which consists of vectors whose coordinates are not equal modulo $2$, and that of $(1,1)$, which consists of vectors whose coordinates are equal modulo $2$.
Therefore, since $\Gamma$ acts by orientation-preserving diffeomorphisms,  for $(p,q) \in \Z^2$, with $ p \neq q \mod 2$, the saddle connections in $St(2s-1)$ with direction $(p,q)$ are the images under some diffeomorphism of $e_1,e_i, e'_i, i=2,\ldots s$, so they have zero mutual intersection. 

On the other hand, for $p,q \in \Z$ both odd,  the saddle connections in $St(2s-1)$ with direction $(p,q)$ are the images under some diffeomorphism of $ g_i, g'_{i}, g_s, i=1,\ldots s-1$ depicted in Figure \ref{g1g2g3}.

\begin{figure} [h!]
	\begin{center}
		
		\begin{tikzpicture}[scale=2]
		
		\draw (3,0) -- (5,0);
		\draw (3,1) -- (6,1);
		\draw (4,2) -- (6,2);
		\draw (5,3) -- (6,3);
		\draw (3,0) -- (3,1);
		\draw (4,0) -- (4,2);
		\draw (5,0) -- (5,3);
		\draw (6,1) -- (6,3);
		\draw (3,0) node {$\bullet$} ;
		\draw (4,0) node {$\bullet$} ;
		\draw (5,0) node {$\bullet$} ;
		\draw (3,1) node {$\bullet$} ;
		\draw (4,1) node {$\bullet$} ;
		\draw (5,1) node {$\bullet$} ;
		\draw (6,1) node {$\bullet$} ;
		\draw (4,2) node {$\bullet$} ;
		\draw (5,2) node {$\bullet$} ;
		\draw (6,2) node {$\bullet$} ;
		\draw (5,3) node {$\bullet$} ;
		\draw (6,3) node {$\bullet$} ;
		\draw (3.5,-0.2) node {$e_1 $};
		\draw (4.5,-0.2) node {$e_2$};
		\draw (2.8,0.5) node {$f_1$};
		\draw (3.8,1.5) node {$f_2$};
		
		\draw (5.5,0.8) node {$e_3$};
		\draw (4.8, 2.5) node {$f_3$};
		\draw [->, line width= 1pt, color=red] (3,0) -- (4,1);
		\draw (3.5,.5) node[anchor=north west, color=red] {$g_1$};
		\draw [->, line width= 1pt, color=red] (4,0) -- (5,1);
		\draw (4.5,.5) node[anchor=north west, color=red] {$g'_1$};
		\draw [->, line width= 1pt, color=red] (4,1) -- (5,2);
		\draw (4.5,1.5) node[anchor=north west, color=red] {$g_2$};
		\draw [->, line width= 1pt, color=red] (5,1) -- (6,2);
		\draw (5.5,1.5) node[anchor=north west, color=red] {$g'_2$};
		\draw [->, line width= 1pt, color=red] (5,2) -- (6,3);
		\draw (5.5,2.5) node[anchor=north west, color=red] {$g_3$};
		\end{tikzpicture}
		\caption{the saddle connections of direction $(1,1)$} \label{g1g2g3}
	\end{center}
\end{figure}
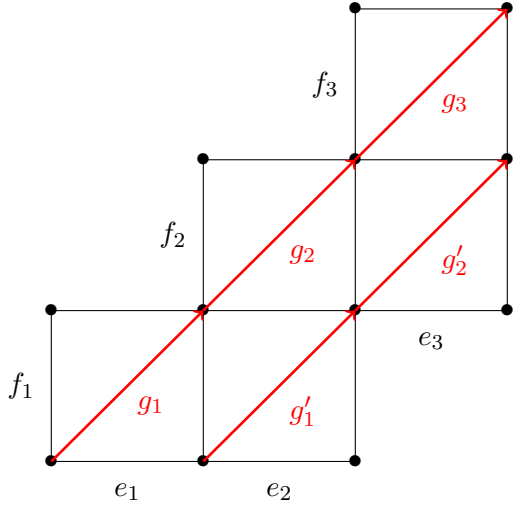
The homology classes of $g_1, g_i, g'_{i-1}, i=2,\ldots s$ are, in the basis $\left\lbrace e_1,\ldots,e_s,f_1,\ldots,f_s\right\rbrace $, $ g_1=e_1+f_1$, and $g_i=e_i + f_i$, $ g'_{i-1} = e_i +f_{i-1}$, so using the intersection matrix we see that 

$$ \mbox{Int}(g_{i} ,g_{j} )=\mbox{Int}(g'_{j-1} , g'_{i-1} )= 
\left\{
\begin{array}{ccc}
(-1)^{j-i} & \mbox{if}  & j> i \\
-(-1)^{i-j} & \mbox{if}  & j<i\\
\end{array}
\right.
$$
and for any $ i=1,\ldots s$,  $j=2,\ldots s$, $ \mbox{Int}(g_{i} ,g'_{j-1})=-(-1)^{j-i}$.

Since the  game is to find curves which intersect a lot, our next task is to maximize $I_{r,r'}$ as a function of $r,r'$.
The next two lemmas are the reason why the geodesics in $\mathcal{Z}$ are special. 
\begin{lemma}\label{Irr'=1}
For any $(r,r') \in \mbox{End}(\mathcal{Z})$, we have $I_{r,r'}=1$.
\end{lemma}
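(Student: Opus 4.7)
The plan is to show $I_{r,r'} \leq 1$ in general and $I_{r,r'} \geq 1$ when $(r,r') \in \mbox{End}(\mathcal{Z})$, so that the two bounds combine to equality.

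For the upper bound, I would use Lemma~\ref{1to1}: any saddle connections $\alpha, \beta$ of distinct directions $(p,q), (p',q')$ in $X_s(x,y)$ project via $\pi$ to closed geodesics $\bar\alpha, \bar\beta$ in $T(x,y)$, with both restrictions $\pi|_\alpha$ and $\pi|_\beta$ injective. Hence $\pi$ sends $\alpha \cap \beta$ injectively into $\bar\alpha \cap \bar\beta$, which consists of exactly $|pq'-p'q|$ transverse points all of the same sign (they lift to parallel families of lines in $\R^2$). Therefore $|\mbox{Int}(\alpha,\beta)| \leq |pq'-p'q|$, giving $I_{r,r'} \leq 1$.

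For the lower bound, first I would establish Veech invariance: each $g \in \Gamma$ induces an orientation-preserving affine diffeomorphism $\phi_g$ of $X_s(x,y)$ (with derivative $g$) that bijects direction-$(p,q)$ saddle connections onto direction-$g\cdot(p,q)$ ones, preserving algebraic intersection; since $\det g = 1$ also preserves the cross product $pq'-p'q$, this yields $I_{gr,gr'} = I_{r,r'}$. By the definition of $\mbox{End}(\mathcal{Z})$, I may therefore assume $(r,r') = (m,n)$ with $m,n \in \Z \cup \{\infty\}$.

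It remains to exhibit, for each such $(m,n)$, a pair of saddle connections attaining $|\mbox{Int}(\alpha,\beta)| = |pq'-p'q|$. My plan is to work inside the ``top square'' $[s-1,s]^2$ of $St(2s-1)$, whose four corners are all identified with the cone point $S$ and whose vertical sides are glued to one another, making it a cylinder in the direction $(1,0)$. For each $m \in \Z$, the straight line of direction $(m,1)$ issuing from a suitable corner of this square wraps $|m|$ times horizontally and terminates at another corner, defining a saddle connection $\alpha_m$ of $X_s(x,y)$ contained entirely in the top square; set also $\alpha_\infty := e_s$. A short computation using the structure of the top cylinder (one crossing with the horizontal core $[e_s]$ and $|m|$ crossings with the right-column vertical core containing $[f_s]$) gives $[\alpha_m] = m\,e_s + f_s$ in the basis of the paper. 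Then by the intersection matrix $[\alpha_m] \cdot [\alpha_n] = m\,\mbox{Int}(e_s, f_s) + n\,\mbox{Int}(f_s, e_s) = m - n = pq'-p'q$, realizing the bound; the case where one of $m,n$ equals $\infty$ is an immediate variant with $[\alpha_\infty] \cdot [\alpha_n] = \mbox{Int}(e_s, f_s) = 1 = pq'-p'q$. The main obstacle will be keeping careful track of the intersection contribution at $S$, which is the common endpoint of $\alpha_m$ and $\alpha_n$: a naive geometric count in the top square misses a contribution from $S$, whereas the homological computation via $[\alpha_m] \cdot [\alpha_n]$ captures it automatically.
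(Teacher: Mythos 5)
Your proof is correct and follows the same overall strategy as the paper's: reduce via Veech-group invariance to the case of integer endpoints, then exhibit explicit saddle connections whose homology classes are computed by counting crossings with the core curves of the cylinders, and evaluate their intersection with the intersection matrix. Two differences are worth noting. The paper normalizes further, using the two $\Gamma$-orbits of directions to put one of the two directions in the form $(0,1)$ or $(1,1)$; this forces a two-case analysis (the second case carried out on the one-cylinder model $X_s(-1,1)$, Figure \ref{un-cylindre}) and pairs the wrapping saddle connection against a single basis curve ($e_1$ or $g_s$). You instead keep both endpoints as arbitrary integers and take both witnesses to be wrapping saddle connections with classes $m e_s + f_s$ and $n e_s + f_s$ in the top-square cylinder, giving $\mbox{Int} = m-n = pq'-p'q$ uniformly and avoiding the case split; your classes and the sign of the ratio check out against the intersection matrix. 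You also state explicitly the upper bound $I_{r,r'}\le 1$ via Lemma \ref{1to1}, which the paper leaves implicit (it is essentially the first case of the proof of Lemma \ref{majorationPetitCarreaux}); the only point to phrase carefully there is that the intersection of $\alpha$ and $\beta$ at the cone point $S$ is not a transverse crossing of the common sign but contributes at most $1$ in absolute value, which still yields $|\mbox{Int}(\alpha,\beta)|\le(|pq'-p'q|-1)+1$. Both versions are sound; yours trades the paper's sharper normalization for a slightly longer homology computation but is arguably cleaner.
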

\begin{proof}
Let $(p,q), (p',q') \in \Z^2$ be such that $p \wedge q =  p' \wedge q' = 1$, and $p/q=r, p'/q'=r'$. We want to prove that for any surface $X_s(x,y) \in \mathcal{T}_s$, there exist saddle connections $\gamma, \gamma'$, with respective directions $(p,q)$ and $(p',q')$, such that $\mbox{Int}(\gamma, \gamma') = pq'-p'q$. 
By the definition of $\mbox{End}(\mathcal{Z})$, there exists $V \in \Gamma$ such that $(V (r),V(r')) \in \Z^2 \cup \{\infty\}$, where $V(r)$ is understood as the projective action of $\Gamma$ on the projective line. Since $\Gamma$ has two orbits in $\Q$, that of $\infty$ (or $0$), which corresponds to $p \neq q \mod 2$, and that of $1$, which corresponds to $p=q=1 \mod 2$, we may assume $V(r)=\infty$ (or $0$) or $V(r)=1$.
 
\textbf{First case} :
$V(r)=0$, that is, $V(p,q)=(0,1)$ (where $V(p,q)$ is understood as the linear action of $\Gamma$ on $\R^2$). Since $V(r') \in \Z$, there exists $n \in \Z$ such that $V(p',q') = (-n,1)$. Then (recall that $R^3=-R$), $R^3 \circ V (p',q')=(1, n)$ and $R^3 \circ V (p,q)=(1, 0)$. Since 
$\Gamma$ is a subgroup of $SL_2(\R)$, we have $n=pq'-p'q$. Since $I_{r,r'}$ does not depend on $(x,y)$,   we might as well assume $(x,y)=(0,1)$, that is, $X_s=St(2s-1)$. Observe (in Figure \ref{upper_right}) that   there exists a saddle connection $\gamma_1$ with direction $(1,n)$, whose homology class is 
$\left[ e_1 \right]+ n\left[ f_1 \right]$, so $\mbox{Int}(e_1, \gamma_1) = n$, hence $I_{r,r'}=1$.

\textbf{Second case} : $V(r)=1$, that is, $V(p,q)=(1,1)$. Since $V(r') \in \Z$, there exists $n \in \Z$ such that $V(p',q') = (-n,1)$. Since $I_{r,r'}$ does not depend on $(x,y)$,   we might as well assume $(x,y)=(-1,1)$. The surface $X_s(-1,1)$ has a $(2s-1)$-square template depicted in Figure \ref{un-cylindre}, and there exists a saddle connection $\gamma_1$ with direction $(-n,1)$, whose homology class is 
$-n\left[ e_s \right]+ \left[ g_s \right]$, so $\mbox{Int}(g_s,\gamma_1) = n$, hence $I_{r,r'}=1$.

\end{proof}

\begin{figure}
\begin{center}
\begin{tikzpicture}[scale=2]
\draw (1,0) -- (3,0);
\draw (2,0) -- (2,2);
\draw (3,0) -- (3,2);
\draw (1,0) -- (1,1);
\draw [color=red]  (1,0) -- (4/3,1);
\draw [color=red]  (4/3,0) -- (5/3, 1);
\draw [color=red]  (5/3,0) -- (2,1);
\draw (1,1) -- (3,1);
\draw (2,0) -- (2,1);
\draw (2,2) -- (3,2);
\draw (1.1,.5) node[anchor=north west, color=red] {$\gamma_1$};
\draw (2.5,0) node[anchor=north west] {$e_2$};
\draw (1.5,0) node[anchor=north west] {$e_1$};
\draw (2.5,1.3) node[anchor=north west] {$e'_2$};
\end{tikzpicture}
\caption{the saddle  connection $\gamma_1$, with $s=2$ and $n=3$, in the two-cylinder case}\label{upper_right}
\end{center}
\end{figure}
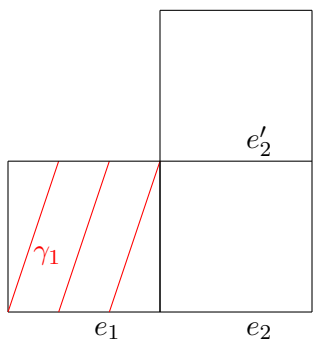

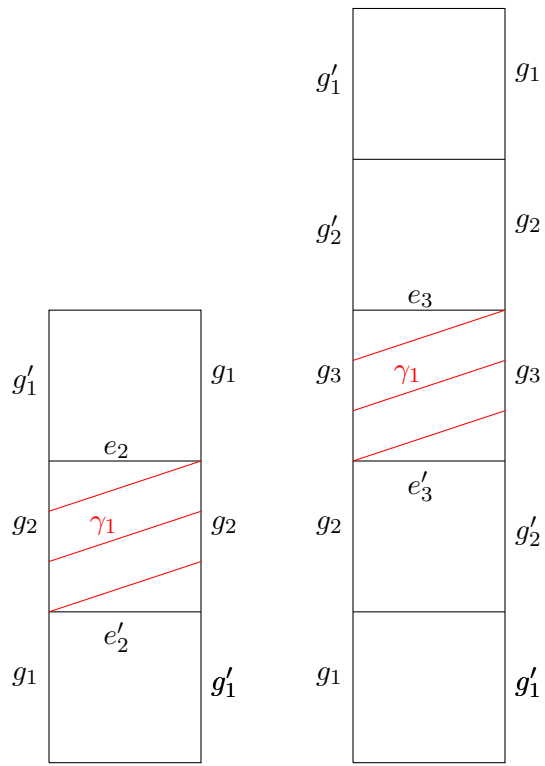
\begin{figure}
\begin{center}
\begin{tikzpicture}[scale=2]
\draw (0,0) -- (1,0)--(1,3)--(0,3)--(0,0);
\draw (0,1) -- (1,1);
\draw (0,2) -- (1,2);
\draw [color=red]  (0,1) -- (1,4/3);
\draw [color=red]  (0,4/3) -- (1,5/3);
\draw [color=red]  (0,5/3) -- (1,2);
\draw (0,.7) node[anchor=north east] {$g_1$};
\draw (0,1.7) node[anchor=north east] {$g_2$};
\draw (0,2.7) node[anchor=north east] {$g'_1$};
\draw (1,.7) node[anchor=north west] {$g'_1$};
\draw (1,.7) node[anchor=north west] {$g'_1$};
\draw (1,1.7) node[anchor=north west] {$g_2$};
\draw (1,2.7) node[anchor=north west] {$g_1$};
\draw (.2,1.7) node[anchor=north west, color=red] {$\gamma_1$};
\draw (0.6,1) node[anchor=north east] {$e'_2$};
\draw (0.6,2.2) node[anchor=north east] {$e_2$};

\draw (2,0) -- (3,0)--(3,5)--(2,5)--(2,0);
\draw (2,1) -- (3,1);
\draw (2,2) -- (3,2);
\draw (2,3) -- (3,3);
\draw (2,4) -- (3,4);
\draw [color=red]  (2,2) -- (3,7/3);
\draw [color=red]  (2,7/3) -- (3,8/3);
\draw [color=red]  (2,8/3) -- (3,3);
\draw (2,.7) node[anchor=north east] {$g_1$};
\draw (2,1.7) node[anchor=north east] {$g_2$};
\draw (2,2.7) node[anchor=north east] {$g_3$};
\draw (3,.7) node[anchor=north west] {$g'_1$};
\draw (3,.7) node[anchor=north west] {$g'_1$};
\draw (3,1.7) node[anchor=north west] {$g'_2$};
\draw (3,2.7) node[anchor=north west] {$g_3$};
\draw (3,3.7) node[anchor=north west] {$g_2$};
\draw (3,4.7) node[anchor=north west] {$g_1$};
\draw (2,3.7) node[anchor=north east] {$g'_2$};
\draw (2,4.7) node[anchor=north east] {$g'_1$};
\draw (2.2,2.7) node[anchor=north west, color=red] {$\gamma_1$};
\draw (2.6,2) node[anchor=north east] {$e'_3$};
\draw (2.6,3.2) node[anchor=north east] {$e_3$};

\end{tikzpicture}
\caption{the saddle  connection $\gamma_1$, for $s=2$, and $s=3$, and $n=3$, in the one-cylinder case}\label{un-cylindre}
\end{center}
\end{figure}

\begin{lemma} \label{99_100}
For $r,r' \in \Q \cup \{\infty\}$, with $(r,r') \not\in \mbox{End}(\mathcal{Z})$, we have $I_{r,r'} < 9/10$.
\end{lemma}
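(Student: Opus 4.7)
The plan is to combine the $\Gamma$-invariance of $I_{r,r'}$ with an explicit combinatorial analysis of how the $q'$ intersections of the projections in the torus distribute over the $(2s-1)^2$ pairs of saddle connections upstairs. I would first verify that $I_{V(r),V(r')} = I_{r,r'}$ for every $V \in \Gamma$: any element of the Veech group acts on $X_s(x,y)$ as an orientation-preserving affine homeomorphism to $X_s(V(x,y))$, sending saddle connections of direction $(p,q)$ bijectively to saddle connections of direction $V(p,q)$ and preserving algebraic intersections. Since $I_{r,r'}$ is already independent of $(x,y)$, this yields the invariance. Using the two orbits of $\Gamma$ on $\Q \cup \{\infty\}$ recalled in the excerpt, I reduce to $r \in \{\infty,1\}$, and then apply a suitable power of $T$ to bring $r' = p'/q'$ into the strip $|p'| \leq q'$. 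The hypothesis $(r,r') \not\in \mbox{End}(\mathcal{Z})$ forces $q' \geq 2$, and after applying $R$ and again reducing by $T^{k}$, the new denominator must again be at least $2$, and so on; equivalently, the continued-fraction-style expansion mentioned in the discussion after Theorem \ref{disqueL(2,2)} has length at least $3$.

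In the case $r = \infty$, I work on $St(2s-1) = X_s(0,1)$: by Lemma \ref{1to1}, the $2s-1$ horizontal saddle connections $e_1, e_2, e_2', \ldots, e_s, e_s'$ each project bijectively onto the horizontal cycle $\bar e$ of $T(0,1)$, and the $2s-1$ saddle connections $\beta_1,\ldots,\beta_{2s-1}$ in direction $(p',q')$ each project bijectively onto a single closed geodesic $\bar\beta$ meeting $\bar e$ in $q'$ transverse points, all of the same sign. Each such intersection point has $2s-1$ preimages in $St(2s-1)$, and every preimage lies on exactly one pair (horizontal saddle connection, $\beta_j$). Thus $I_{r,r'}$ equals the maximum, over such pairs, of the proportion of the $q'$ torus intersection points whose lift lands on that pair. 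The case $r = 1$ is handled symmetrically using the one-cylinder template of Figure \ref{un-cylindre} and the saddle connections $g_i, g_i'$.

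The main obstacle is ruling out concentration of the $q'$ lifts on a single pair. For small $q'$ the trivial bound $I_{r,r'} \leq (q'-1)/q'$, available whenever $I_{r,r'} < 1$, already beats $9/10$ once $q' \leq 9$. For larger $q'$ I would track the monodromy of $\Pi : X_s \to T$ along $\bar\beta$: consecutive intersection points with $\bar e$ correspond to consecutive applications of a sheet-permutation determined by how $\bar\beta$ winds through the horizontal cylinders between crossings. Full concentration on a single pair would force this permutation-orbit to be trivial, a condition which, in the $T$-reduced normal form, can be shown by direct computation to force $p' = \pm 1$, i.e.\ to place $(r,r')$ back in $\mbox{End}(\mathcal{Z})$. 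Outside that case the monodromy disperses the lifts across at least two pairs by an amount quantitative enough to secure the strict bound $I_{r,r'} < 9/10$. The delicate step will be making this dispersal quantitative uniformly in $q'$ while cleanly propagating the bound through the recursive Veech reduction used to reach the $|p'| \leq q'$ normal form.
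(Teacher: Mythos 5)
Your setup coincides with the paper's: reduce by the Veech group to a normal form for $(r,r')$, use Lemma \ref{1to1} to identify $\mbox{Int}(\alpha_i,\beta_j)$ with the number of the $|pq'-p'q|$ torus intersection points whose lift lands on the pair $(\alpha_i,\beta_j)$, and then bound the proportion that any single pair can capture. But the proof has a genuine gap exactly where the real work lies. Your quantitative input is the bound $I_{r,r'}\leq (n-1)/n$ (with $n=|pq'-p'q|$), valid once $I_{r,r'}<1$; this only beats $9/10$ for $n\leq 9$, and for large $n$ you offer nothing beyond the assertion that ``the monodromy disperses the lifts\ldots by an amount quantitative enough,'' which you yourself flag as the delicate step. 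That step is not a refinement of your argument --- it \emph{is} the argument. The paper's proof consists precisely of this estimate: it examines the cyclic sequence $S_{int}$ of intersections of a saddle connection $\gamma$ with $e_1,e_i,e'_i$ (resp.\ $g_i,g'_i$), shows that for $i\geq 2$ no two consecutive terms are equal (giving proportion $\leq 1/2$), and for the exceptional curves $e_1$, $g_s$, $g'_1$ runs a case analysis on the size of $q/p$ (Cases 1.1--1.3 and 2.1--2.6) showing that blocks of consecutive equal letters of length $\approx\lceil q/p\rceil$ are always followed by blocks of other letters of length $\geq\lfloor q/2p\rfloor$, yielding a proportion $\leq 3/4$ and hence $\mbox{Int}\leq \frac{3(n-1)}{4}+1\leq\frac{9n}{10}$, uniformly in $n$. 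Without some version of this block analysis your bound degenerates to $1$ as $n\to\infty$.

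Two secondary problems. First, even the weak statement $I_{r,r'}<1$ for $(r,r')\notin\mbox{End}(\mathcal{Z})$ is not free: you must actually rule out a single pair capturing all $n$ intersection points, and your claim that full concentration ``forces $p'=\pm 1$'' is asserted, not proved; note also that $\mbox{End}(\mathcal{Z})$ in normal form is subtler than $p'=\pm1$ (for instance $(1/2,\infty)\in\mbox{End}(\mathcal{Z})$ via $R$, and the non-member $(3/7,\infty)$ already achieves $I=5/7$ by the paper's remark, so the dispersal is genuinely weak for some admissible pairs and the constant $9/10$ is not generous). Second, the identification of $I_{r,r'}$ with a proportion of torus intersection points needs care at the cone point: the paper separates the intersection at $S$ and accounts for it with an extra $+1$, since the local picture there (total angle $2\pi(2s-1)$) is not read off from the torus.
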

\begin{proof}
Let $p,q,p',q' \in \Z$ be such that $p \wedge q = p' \wedge q' = 1$, $p/q=r$, $p'/q'=r'$. 

\textbf{First case} : $p,q,p',q'$ are not all odd. 

Multiplying $(p,q)$ and $(p',q')$ by some element of the Veech group, and swapping $(p,q)$ and $(p',q')$ if necessary, we may assume $(p',q')=(1,0)$, that is, $r'= \infty$,  and $|p| < |q|$, so $pq'-p'q= q$. Furthermore by the symmetries  of $St(2s-1)$, we only need consider the case when $0 < p < q$. Note that we then have $q \geq 2$, otherwise $p$ is $0$  and in that case we have $(r,r') \in \mbox{End}(\mathcal{Z})$. Let $\gamma$ be a saddle connection with direction $(p,q)$. We want to show that 
\begin{align*}
\mbox{Int} (e_i, \gamma) &= \mbox{Int} (e'_i, \gamma) < \frac{9q}{10}  \mbox{ for } i = 2, \ldots, s\\
 \mbox{Int} (e_1, \gamma) & < \frac{9q}{10}.
 \end{align*}
 The projection $\Pi$ restricted to $\gamma$ is injective by Lemma \ref{1to1}, so the $q-1$ intersections of $(p,q)$ with $(1,0)$ in $\T^2 \setminus \{(0,0)\}$  lift to $q-1$ intersections of $\gamma$ with the singular cycle $e_1+e_2+e'_2+\ldots e_s+e'_s$.  Let us consider the sequence $S_{int}$ of all intersections of $\gamma$ with $e_1, e_i, e'_i$, for $i=2, \ldots, s$, in cyclical order along $\gamma$, with the intersection at the conical point $S$ set apart. Denote $\#S_{int} (e_i)$ 
 (resp. $\#S_{int} (e'_i)$) the number of $e_i$'s (resp. $e'_i$'s) in the sequence $S_{int}$. We have 
 \[
 \mbox{Int} (e, \gamma) \leq \#S_{int} (e)+1 \mbox{ for } e=e_1, \ldots, e_s, \mbox{ and } e=e'_2, \ldots, e'_s,
\] 
where the $+1$ accounts for the intersection at $S$. 
 Observe that in the sequence $S_{int}$ there are never two consecutive $e_i$ or $e'_i$, for $i=2, \ldots, s$~: each $e'_i$ is followed by $e_i$ or $e'_{i+1}$, and each $e_i$ is followed by $e'_i$ or $e_{i-1}$.
 So the proportion of $e_i$ or $e'_i$ in $S_{int}$, for $i=2, \ldots, s$ is at most $1/2$ (only possible when $s=2$).  Therefore, since $q \geq 2$, 
\[
    \mbox{Int} (e_i, \gamma) =  \mbox{Int} (e'_i, \gamma)  \leq \frac{q-1}{2}+1 \leq \frac{3}{4}q .
\] 
 Intersections with $e_1$ must be treated separately, with  a case-by-case analysis :
 
 \textbf{Case 1.1}: $p < q < 2p$. Observe that each block of $e_1$ has length at most $2$, and  is followed by at least an $e'_2$, so  the proportion of $e_1$ in the sequence $S_{int}$ is at most $2/3$, so 
 \[
  \mbox{Int} (e_1, \gamma) \leq \frac{2(q-1)}{3} +1 \leq \frac{9q}{10}.
  \]  
  \textbf{Case 1.2}: $2p < q < 3p$ (see Figure \ref{pq37}). Then a block of $e_1$ has length at most $3$, and is followed by at least an $e'_2$ and an $e_2$, so as previously, the proportion of $e_1$ in the sequence $S_{int}$ is at most $3/5$, so 
 \[
  \mbox{Int} (e_1, \gamma) \leq \frac{3(q-1)}{5} +1 \leq \frac{9q}{10}.
  \]
 \textbf{Case 1.3}: $3p < q $.  Intersections with $e_1$  come in blocks of length at most  $\lceil q/p \rceil$, because two consecutive (along $\gamma$) intersections are exactly $p/q$ apart along $e_1$ (see Figure \ref{pq37}). Each block of $e_1$'s is followed by a block of $e'_2 e_2$ (this is where we use the fact that $(r,r') \not\in \mbox{End}(\mathcal{Z})$), of length at least $\lfloor q/2p \rfloor$. Recall that $2\lfloor q/2p \rfloor \geq \lceil q/p \rceil -2$. So the proportion of $e_1$ in $S_{int}$ is at most 
 \[
 \frac{\lceil q/p \rceil}{\lceil q/p \rceil + 2\lfloor q/2p \rfloor} \leq \frac{\lceil q/p \rceil}{2\lceil q/p \rceil -2} \leq \frac{3}{4}
 \]
 where the last inequality stands because $3 < q/p $. Then 
 \[
  \mbox{Int} (e_1, \gamma) \leq \frac{3(q-1)}{4} +1 \leq \frac{9q}{10}.
  \]
  This finishes the first case.
\begin{figure}
\begin{center}
\begin{tikzpicture}[line cap=round,line join=round,>=triangle 45,x=1cm,y=1cm, scale=1/2]
\clip(-5.32943931065326,-3.1162154079788844) rectangle (22.250064962662353,15);
\draw [line width=1pt] (0,0)-- (14,0)-- (14,14)-- (7,14)-- (7,7)-- (0,7)--(0,0);

\draw [line width=1pt,color=ffqqqq] (13,0)-- (14,7/3);
\draw [line width=1pt,color=ffqqqq] (0,2.3333333333333335)-- (2,7);
\draw [line width=1pt,color=ffqqqq] (2,0)-- (5,7);
\draw [line width=1pt,color=ffqqqq] (5,0)-- (11,14);
\draw [line width=1pt,color=ffqqqq] (11,0)-- (14,7);
\draw [line width=1pt,color=red] (7,0)-- (13,14);

\draw [<->, line width=1pt] (7,-1)-- (13,-1);
\draw [<->, line width=1pt] (2,-1)-- (5,-1);

\draw [fill=black] (0,0) circle (2pt);
\draw [fill=black] (14,14) circle (2.5pt);
\draw [fill=black] (14,7) circle (2.5pt);
\draw [fill=black] (7,7) circle (2.5pt);

\draw [fill=black] (7,14) circle (2.5pt);
\draw [fill=black] (7,0) circle (2.5pt);
\draw [fill=black] (0,7) circle (2.5pt);
\draw[color=black] (3.5,-1.8) node {$\frac{p}{q}$};
\draw[color=black] (10,-1.8) node {$\frac{2p}{q}$};
\draw[color=red] (9,3) node {$\gamma$};

\end{tikzpicture}
\caption{$(p,q)=(3,7)$}\label{pq37}
\end{center}
\end{figure}

\begin{remark} 
There is numerical evidence that 
 $I_{r,\infty} \leq 2/3$ for all $r \in \Q$ such that $(r,\infty) \not\in \mbox{End}(\mathcal{Z})$,  except when $r=3/7$, in which case $I_{r,\infty} =5/7$.
\end{remark}

\textbf{Second case} : $p,q,p',q'$ are all odd. 
Multiplying $(p,q)$ and $(p',q')$ by some element of the Veech group, and swapping $(p,q)$ and $(p',q')$ if necessary, we may assume $(p',q')=(1,1)$, that is, $r'= 1$, so $pq'-p'q= p-q$. Again multiplying by an element of the Veech group which stabilizes $(1,1)$, we may assume $(p,q)=(p-q)(1,0) +q(1,1)$ satisfies $|p-q|>|q|$.  Furthermore by the symmetries  of $St(2s-1)$, we only need consider the case when 
$0 < q < p-q$. Note that we then have $p-q \geq 2$, otherwise $p=q$  and in that case we have $(r,r') \in \mbox{End}(\mathcal{Z})$. Let 
$\gamma$ be a saddle connection with direction $(p,q)$. We want to show that 
\begin{align*}
\mbox{Int} (\gamma, g_i) & < \frac{9(p-q)}{10}  \mbox{ for } i = 1, \ldots, s\\
 \mbox{Int} ( \gamma, g'_i) & < \frac{9(p-q)}{10} \mbox{ for } i = 1, \ldots, s-1.
 \end{align*}
 The projection $\Pi$ restricted to $\gamma$ is injective by Lemma \ref{1to1}, so the $p-q-1$ intersections of $(p,q)$ with $(1,1)$ in $\T^2 \setminus \{(0,0)\}$  lift to $p-q-1$ intersections of $\gamma$ with the singular cycle $g_1+g'_1+g_2+g'_2+\ldots +g_s$.  Let us consider the sequence $S_{int}$ of all intersections of $\gamma$ with $g_1, g'_1, \ldots, g_s$,  in cyclical order along $\gamma$, with the intersection at the conical point $S$ set apart. Denote $\#S_{int} (g_i)$ 
 (resp. $\#S_{int} (g'_i)$) the number of $g_i$'s (resp. $g'_i$'s) in the sequence $S_{int}$. 
We have 
 \[
 \mbox{Int} (g, \gamma) \leq \#S_{int} (g)+1 \mbox{ for } g=g_1, \ldots, g_s, \mbox{ and } g=g'_1, \ldots, g'_{s-1},
\] 
where the $+1$ accounts for the intersection at $S$. 
 Observe that in the sequence $S_{int}$ there are never two consecutive $g_i$ , for $i=1, \ldots, s-1$, or $g'_i$, for $i=2, \ldots, s-1$ : each $g'_i$  ($2 \leq i \leq s-1$) is followed by $g_i$ or $g_{i-1}$,
  and each $g_i$ ($1 \leq i \leq s-1$) is followed by $g'_i$ or $g'_{i+1}$  (see Figure \ref{un-cylindre}).
 So the proportion of $g_i$ or $g'_i$ in $S_{int}$, for $i=2, \ldots, s$ is at most $1/2$ (only possible when $s=2$).  Therefore, since $p-q \geq 2$, 
\[
    \mbox{Int} (g, \gamma)  \leq \frac{p-q-1}{2}+1 \leq \frac{3}{4}(p-q)\mbox{ for } g=g_1, \ldots, g_{s-1}, \mbox{ or } g=g'_2, \ldots, g'_{s-1} .
\] 

 Intersections with $g'_1$ and $g_s$ must be treated separately, with  a case-by-case analysis. First we deal with  $g_s$ : 
 
 \textbf{Case 2.1}: $q < p-q < 2q$. Observe that each block of  $g_s$ has length at most $2$, and  is followed by at least a $g_{s-1}$, so  the proportion of $g_s$ in the sequence $S_{int}$ is at most $2/3$, whence 
 $\mbox{Int} (g_s, \gamma) \leq  \frac{9(p-q)}{10}$.

 \textbf{Case 2.2}: $2q < p-q < 3q$. Then a block of $g_s$ has length at most $3$, and is followed by at least a $g_{s-1}$ and a $g'_{s-1}$, so so  the proportion of $g_s$ in the sequence $S_{int}$ is at most $3/5$, whence 
 $\mbox{Int} (g_s, \gamma) \leq  \frac{9(p-q)}{10}$.
 
  \textbf{Case 2.3}: $3q < p-q $.  Intersections with $g_s$  come in blocks of length at most  $\lceil (p-q)/q \rceil$, because two consecutive (along $\gamma$) intersections are exactly $q/(p-q)$ apart along $g_s$ (see Figure \ref{un-cylindre}). Each block of $g_s$'s is followed by a block of $g_{s-1} g'_{s-1}$, of length at least $\lfloor (p-q)/2q \rfloor$. Recall that $2\lfloor (p-q)/2q \rfloor \geq \lceil (p-q)/q \rceil -2$. So the proportion of $g_s$ in $S_{int}$ is at most 
 \[
 \frac{\lceil (p-q)/q \rceil}{\lceil (p-q)/q \rceil + 2\lfloor (p-q)/2q \rfloor} \leq \frac{\lceil (p-q)/q \rceil}{2\lceil (p-q)/q \rceil -2} \leq \frac{3}{4}
 \]
 where the last inequality stands because $3 < (p-q)/q $. Again, $\mbox{Int} (g_s, \gamma)  \leq \frac{9(p-q)}{10}$.
 
Now we deal with $g'_1$. In that case it is more convenient to write $(p,q)=(p-q)(2,1)+(2q-p)(1,1)$. Note that $2q-p < 0$ because $p-q >q$.

 \textbf{Case 2.4}: $p-q < 2|2q-p|$, that is, $2q < p-q $. Observe that each block of  $g'_1$ has length at most $2$, and  is followed by at least a $g_{1}$, so  the proportion of $g'_1$ in the sequence $S_{int}$ is at most $2/3$, whence 
 $\mbox{Int} (g'_1, \gamma) \leq  \frac{9(p-q)}{10}$.

 \textbf{Case 2.5}: $2|2q-p|<p-q < 3|2q-p|$, that is, $3q/2 < p-q < 3q$. Then a block of $g'_1$ has length at most $3$, and is followed by at least a $g_{1}$ and a $g'_{2}$, so so  the proportion of $g_s$ in the sequence $S_{int}$ is at most $3/5$, whence 
 $\mbox{Int} (g_s, \gamma) \leq  \frac{9(p-q)}{10}$.
 
  \textbf{Case 2.6}: $p-q > 3|2q-p|$, that is, $q < p-q < 3q/2$.  Intersections with $g'_1$  come in blocks of length at most  $\lceil (p-q)/|2q-p| \rceil$, because two consecutive (along $\gamma$) intersections are exactly $|2q-p|/(p-q)$ apart along $g'_1$ (see Figure \ref{un-cylindre}). Each block of $g'_1$'s is followed by a block of $g'_{2} g_{1}$, of length at least $\lfloor (p-q)/2|2q-p| \rfloor$. Recall that $2\lfloor (p-q)/2|2q-p| \rfloor \geq \lceil (p-q)/|2q-p| \rceil -2$. So the proportion of $g'_1$ in $S_{int}$ is at most 
 \[
 \frac{\lceil (p-q)/|2q-p| \rceil}{\lceil (p-q)/|2q-p| \rceil + 2\lfloor (p-q)/2|2q-p| \rfloor} \leq \frac{\lceil (p-q)/|2q-p| \rceil}{2\lceil (p-q)/|2q-p| \rceil -2} \leq \frac{3}{4}
 \]
 where the last inequality stands because $3 < (p-q)/|2q-p| $. Again,  $ \mbox{Int} (g'_1, \gamma)  \leq \frac{9(p-q)}{10}$.

\end{proof}

\section{Proof of Theorem \ref{disqueL(2,2)}}\label{results L22}
\begin{lemma}\label{majorationPetitCarreaux}
Take $s \in \N^*$ and $X \in \mathcal{H}(2s-2)$. Assume   $X$ is an $n$-fold ramified Riemannian cover of a  flat torus. Then $\mbox{KVol}(X)\leq n$, unless there exists a pair of closed geodesics $\alpha$ and $\beta$ on $X$, which have the same direction, non-zero intersection, and such the product of their lengths is $<\mbox{Vol}(X)/n$.
\end{lemma}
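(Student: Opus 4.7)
By taking geodesic representatives in homology classes, it suffices to bound $\mbox{Vol}(X)\,|\mbox{Int}_X(\alpha,\beta)|/(l(\alpha)l(\beta))$ over pairs of closed geodesics on $X$. Write $\pi\colon X\to T$ for the $n$-fold ramified cover. Every closed geodesic $\alpha\subset X$ projects to a closed geodesic on $T$ of some primitive direction $(p,q)\in\Z^2$, wrapping the primitive closed geodesic $\gamma$ of direction $(p,q)$ on $T$ a positive integer number $m_\alpha$ of times, so that $l(\alpha)=m_\alpha\,l_r$ in the notation of Subsection~\ref{A short excursion into the Teichmüller space of flat tori}; likewise for $\beta$ with data $(p',q'),\gamma',m_\beta,l_{r'}$. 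The argument splits according to whether the two directions coincide.

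\textbf{Different directions.} When $(p,q)\neq\pm(p',q')$, the crucial estimate is
\[
|\mbox{Int}_X(\alpha,\beta)|\leq \min(m_\alpha,m_\beta)\,|pq'-p'q|.
\]
Since $\pi$ is a local isometry, $\pi(\alpha\cap\beta)\subseteq \gamma\cap\gamma'$, a set of cardinality $|pq'-p'q|$. For each $\bar p\in\gamma\cap\gamma'$, the fiber $\pi^{-1}(\bar p)$ has at most $n$ points, of which at most $m_\alpha$ lie on $\alpha$ and at most $m_\beta$ on $\beta$ (because $\alpha\to\gamma$ and $\beta\to\gamma'$ are covers of those degrees), so at most $\min(m_\alpha,m_\beta)$ lie in $\alpha\cap\beta$. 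As $\alpha$ and $\beta$ are parallel translates of fixed directions, all transverse crossings carry the same sign, so the geometric count equals $|\mbox{Int}_X(\alpha,\beta)|$. Combining with $\mbox{Vol}(X)=n\,\mbox{Vol}(T)$ and the flat-torus identity $K_{r,r'}=\mbox{Vol}(T)\,|pq'-p'q|/(l_r l_{r'})\leq 1$ (i.e., $\mbox{KVol}(T)=1$), this yields
\[
\mbox{Vol}(X)\,\frac{|\mbox{Int}_X(\alpha,\beta)|}{l(\alpha)\,l(\beta)}\leq \frac{n\,K_{r,r'}}{\max(m_\alpha,m_\beta)}\leq n.
\]

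\textbf{Same direction and main obstacle.} When $\alpha$ and $\beta$ share their direction, they either lie in a common cylinder of parallel closed geodesics (hence are disjoint, so $\mbox{Int}_X(\alpha,\beta)=0$) or are two saddle connections meeting only at the cone point $S$. If the algebraic intersection is zero, the pair contributes nothing; otherwise the ``no bad pair'' hypothesis forces $l(\alpha)l(\beta)\geq \mbox{Vol}(X)/n$, and a local check of the sector structure at $S$ shows the corresponding quotient remains bounded by $n$. Taking the supremum over all pairs then gives $\mbox{KVol}(X)\leq n$. The main technical step is the fiber-counting identity $|\pi^{-1}(\bar p)\cap\alpha|\leq m_\alpha$ together with controlling the intersection at the ramification point $S$; both are handled by the fact that $S$ is the unique cone point of $X$, so the ramification is confined to a single fiber and contributes consistently with the sign.
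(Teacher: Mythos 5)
Your proof is correct and follows essentially the same route as the paper's: split into the transverse-direction case, where projecting to the torus and invoking $\mbox{KVol}(\T^2)=1$ bounds the ratio by $n$, and the same-direction case, where two parallel simple closed geodesics meet at most once (at the cone point), so the ``unless'' clause absorbs exactly the short pairs with non-zero intersection. The only cosmetic difference is that you bound $\mbox{Int}(\alpha,\beta)$ by counting fiber points over each of the $|pq'-p'q|$ intersections downstairs, while the paper bounds it via the minimal spacing of consecutive intersections along $\Pi(\alpha)$; both give the same estimate.
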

\begin{proof}
Assume, up to isometry and re-scaling, that  the flat torus which $X$ covers is $\R^2 / (1,0)\Z \oplus (a,b)\Z$, with $|a| \leq 1/2$ and $a^2+b^2 \geq 1$. Then any closed geodesic on the torus is at least $1$ long, and the volume of the torus is $b$, so the volume of $X$ is $nb$. 

Let  $\alpha$ and $\beta$ be simple closed geodesics in  $X$. Since  $X$ has but one conical point, both $\alpha$ and  $\beta$  have a well-defined direction, say  $\frac{p}{q}$ and $\frac{p'}{q'}$ in irreducible terms.

\begin{remark} If  $X$ had several conical points, $\alpha$ or $\beta$ could be made up of several saddle connections of distinct directions. 
\end{remark}

Let $\Pi : X \longrightarrow \T^2$ be the  projection of the ramified cover. 

Then $\Pi$  maps $\alpha$ (resp. $\beta$) to a closed geodesic of the torus, with homology class $(p,q)$ (resp. $(p',q')$). 

\textbf{First case} : $pq'-p'q \neq 0$.

Observe that  $\Pi(\alpha)$ and $\Pi(\beta)$  intersect exactly  $|pq'-p'q|$ times, and their intersections are equidistributed along $\Pi(\alpha)$, since the first return map to  $\Pi(\alpha)$ of the linear flow with  direction $(p',q')$, is a  rotation of $\Pi(\alpha)$. Therefore, two consecutive  intersections are exactly $\sqrt{(p+aq)^2+b^2q^2}/|pq'-p'q|$ apart  along  $\Pi(\alpha)$.

Thus, two consecutive intersections of  $\alpha$ and $\beta$ cannot be  less than $\frac{\sqrt{(p+aq)^2+b^2q^2}}{|pq'-p'q|}$ apart along  $\alpha$, since the restriction to  $\alpha$ of $\Pi$ is a local isometry. Hence denoting  $l(\alpha)$ the length of  $\alpha$, 
\[
\mbox{Int}(\alpha, \beta) \leq \frac{ |pq'-p'q|}{\sqrt{(p+aq)^2+b^2q^2} }l(\alpha).
\]

Besides, since  $\Pi$ is 1-Lipschitz, we have $l(\beta) \geq \sqrt{(p'+aq')^2+b^2q'^2}$, whence
\[
\frac{\mbox{Int}(\alpha, \beta) }{l(\alpha)l(\beta)} \leq \frac{ |pq'-p'q|}{\sqrt{(p+aq)^2+b^2q^2}\ \sqrt{(p'+aq')^2+b^2q'^2}} \leq K(\T^2)=\frac{1}{b}
\]
where the last equality stems from \cite{MM}.

\textbf{Second case} : $pq'-p'q= 0$. Then since  $\alpha$ et $\beta$ are  simple, we have $p=p'$, $q=q'$.
So the closed curves $\alpha$ and $\beta$, if they are distinct,  cannot meet anywhere but at the conical point. Hence their algebraic  intersection is $0$ or $\pm 1$. Therefore
\[
\frac{\mbox{Int}(\alpha, \beta) }{l(\alpha)l(\beta)} \leq \frac{ 1}{(p+aq)^2+b^2q^2} \leq 1.
\]
So we have $\mbox{KVol}(X)\leq  \frac{1}{b}nb=n$, unless there exists a pair of closed geodesics $\alpha$ and $\beta$ on $X$, which have the same direction (that is, $(p,q)=(p',q')$), non-zero intersection, and such that the product of their lengths is $< b$.
\end{proof}

\begin{corollary} \label{corL22}
We have $\mbox{KVol}(St(2s-1))=2s-1$.
\end{corollary}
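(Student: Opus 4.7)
My plan is to bracket $\mbox{KVol}(St(2s-1))$ between $2s-1$ on both sides by combining the just-proved Lemma \ref{majorationPetitCarreaux} with an explicit pair of curves.

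For the upper bound, I apply Lemma \ref{majorationPetitCarreaux} with $n = 2s-1$. The surface $St(2s-1) = X_s(0,1)$ is by construction a $(2s-1)$-fold ramified Riemannian cover of the unit square torus $\T^2 = \R^2/\Z^2$, so the hypotheses hold with $a = 0$, $b = 1$, giving $\mbox{Vol}(X)/n = 1$. I then verify that the exceptional case of the lemma does not occur: I claim there is no pair of simple closed geodesics $\alpha, \beta$ on $St(2s-1)$ with the same direction, nonzero algebraic intersection, and $l(\alpha)\, l(\beta) < 1$. Indeed, by Lemma \ref{longueur}, a saddle connection of direction $(p,q)$ on $X_s(0,1)$ has length $\sqrt{p^2+q^2}\geq 1$, so any two saddle connections in the same direction have length product at least $1$. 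On the other hand, a non-singular closed geodesic avoids the conical point and is confined to a cylinder of parallel geodesics, so it is disjoint from any other closed geodesic of the same direction. Thus the exception is excluded and Lemma \ref{majorationPetitCarreaux} yields $\mbox{KVol}(St(2s-1)) \leq 2s-1$.

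For the lower bound, I take $\alpha = e_1$ and $\beta = f_1$, the two unit-length horizontal and vertical closed curves bounding the bottom-left square of the template. The intersection table computed in Section \ref{preliminaries} gives $\mbox{Int}(e_1, f_1) = 1$, and both curves have length $1$ in the flat metric, while $\mbox{Vol}(St(2s-1))=2s-1$ since the surface is tiled by $2s-1$ unit squares. Therefore
\[
\mbox{KVol}(St(2s-1)) \;\geq\; \mbox{Vol}(St(2s-1)) \cdot \frac{\mbox{Int}(e_1,f_1)}{l(e_1)\, l(f_1)} \;=\; (2s-1)\cdot \frac{1}{1\cdot 1} \;=\; 2s-1,
\]
and the two bounds combine to give the corollary.

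I do not anticipate a substantial obstacle: all the content is concentrated in Lemma \ref{majorationPetitCarreaux}, and verifying the exceptional case is a one-line consequence of Lemma \ref{longueur} together with the parallel-cylinder structure of non-singular closed geodesics. The only thing one should be careful about is that the two geodesics in the hypothetical exception might both be saddle connections meeting at $S$ with $\mbox{Int}=\pm 1$; but since each has length $\geq 1$ their product of lengths is $\geq 1$, so the strict inequality in the exception still fails.
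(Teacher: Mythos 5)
Your proof is correct and follows essentially the same route as the paper: the upper bound comes from Lemma \ref{majorationPetitCarreaux} together with the observation that every closed geodesic on $St(2s-1)$ has length at least $1$ (which rules out the exceptional case since $\mbox{Vol}/n=1$), and the lower bound comes from an explicit pair of unit-length curves with intersection $\pm 1$ (you use $e_1,f_1$; the paper uses $e_1,f_2$, an immaterial difference). Your slightly more detailed justification of why the exceptional case cannot occur is a welcome expansion of the paper's terser statement, but it is not a different argument.
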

\begin{proof}
Lemma \ref{majorationPetitCarreaux} and the facts  that any closed curve on $St(2s-1)$ is at least $1$ long, and that $\mbox{Vol}(St(2s-1))=2s-1$, entail that $\mbox{KVol}(St(2s-1)) \leq 2s-1$. On the other hand, we have $\mbox{Int}(e_1, f_2)=1$, $l(e_1)=l(f_2)=1$.
\end{proof}

Now we want to know for which elements $X=X_s(x,y)$ of $\mathcal{T}_s$ we  have $\mbox{KVol}(X_s(x,y))>2s-1$. 
\begin{lemma}\label{>3}
For $(x,y) \in V_{\pm 1}  \cap  \mathcal{D}$,  where   $V_{\pm 1}= \{ (x,y)  \ : \  (x \pm 1)^2 +(y -1/2)^2 < 1/4 \}$, we have $\mbox{KVol}(X_s(x,y))>2s-1$. For any other $(x,y)$ in $\mathcal{D}$, we have 
$ \mbox{KVol}(X_s(x,y)) \leq 2s-1$. Furthermore, $ \mbox{KVol}(X_s(x,y)) $ goes to infinity when $y$ goes to zero, and goes to $2s-1$ when $y$ goes to $\infty$ while $(x,y)$ remains in $\mathcal{D}$.
\end{lemma}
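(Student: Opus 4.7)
The plan is to mirror the proof of Lemma \ref{majorationPetitCarreaux} in the un-normalized torus $T(x,y)$ to obtain an upper bound, and combine it with an arithmetic check identifying which $1$-cylinder horocyclic disks meet $\mathcal{D}$. First I reduce to saddle connection pairs $\alpha,\beta$ on $X_s(x,y)$ and split on whether their directions in $T(x,y)$ coincide. When they differ, the Case 1 argument of Lemma \ref{majorationPetitCarreaux}---projection, equidistribution of intersections in the torus, and the $1$-Lipschitz property of $\Pi$---gives $|\mbox{Int}(\alpha,\beta)|/(l(\alpha)\, l(\beta)) \leq 1/y$, using $\mbox{KVol}(T)=1$ for flat tori; multiplying by $\mbox{Vol}(X_s(x,y)) = (2s-1)y$ bounds this contribution by $2s-1$. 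When they agree, with primitive direction $(p,q)$, Lemma \ref{1to1} forces $\alpha$ and $\beta$ to meet only at the conical point, so $|\mbox{Int}(\alpha,\beta)|\leq 1$; each has squared length $(p+qx)^2+(qy)^2$, and the contribution $(2s-1)y/[(p+qx)^2+(qy)^2]$ is $\leq 2s-1$ exactly when $(x,y)$ lies outside the open Euclidean disk $D_{p,q}$ of radius $1/(2q^2)$ centered at $(-p/q,1/(2q^2))$.

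Such an intersecting same-direction pair exists only when $(p,q)$ lies in the $1$-cylinder orbit (both coordinates odd), because in $s$-cylinder directions parallel saddle connections bound a cylinder and are disjoint. The arithmetic heart---and the step I expect to be the main obstacle---is showing that for $(p,q)$ primitive with $p,q$ both odd and $q\geq 3$ the disk $D_{p,q}$ misses $\mathcal{D}$: $D_{p,q}$ is contained in $\{0<y<1/q^2\}$, while $(x,y)\in\mathcal{D}$ with $y<1/q^2$ forces $|x|\geq\sqrt{1-1/q^4}$ from $x^2+y^2\geq 1$; combining with $|x+p/q|<1/(2q^2)$ pins $|p|/q$ inside $[\sqrt{1-1/q^4}-1/(2q^2),\, 1+1/(2q^2)]$, a window of length less than $1/q$ centered near $1$, whose only multiple of $1/q$ is $1$ itself, forcing $|p|=q$ and contradicting $\gcd(p,q)=1$ when $q\geq 3$. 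For $q=1$, the condition $(x\pm 1)^2+y^2<y$ rewrites as $(x\pm 1)^2+(y-1/2)^2<1/4$, so $D_{\pm 1,1}=V_{\mp 1}$. Combined with the first paragraph this yields $\mbox{KVol}(X_s(x,y))\leq 2s-1$ for $(x,y)\in\mathcal{D}\setminus V_{\pm 1}$; and for $(x,y)\in V_{\pm 1}$ transporting the pair $g_1,g'_1$ with $\mbox{Int}(g_1,g'_1)=1$ via the Veech element (identity or $R$) sending direction $(1,1)$ to $(\mp 1,1)$ yields two saddle connections of common length $\sqrt{(x\mp 1)^2+y^2}$ and intersection $\pm 1$, giving $\mbox{KVol}(X_s(x,y))\geq(2s-1)y/[(x\mp 1)^2+y^2]>2s-1$.

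For the limits, as $y\to 0$ in $\mathcal{D}$ one necessarily has $(x,y)\to (\pm 1,0)$, and $x^2+y^2\geq 1$ gives $1\mp x\leq y^2/(1\pm x)\leq y^2$, whence $(x\mp 1)^2+y^2\leq y^2(1+y^2)$ and the lower bound $(2s-1)y/[(x\mp 1)^2+y^2]$ diverges. As $y\to\infty$ in $\mathcal{D}$, the upper bound from the previous paragraph gives $\mbox{KVol}\leq 2s-1$, while inequality (\ref{etoile}) applied to $(r,r')=(\infty,0)\in\mbox{End}(\mathcal{Z})$---for which $I_{r,r'}=1$ by Lemma \ref{Irr'=1} and a direct computation yields $K_{r,r'}(x,y)=y/\sqrt{x^2+y^2}\to 1$---provides the matching lower bound $\mbox{KVol}\to 2s-1$.
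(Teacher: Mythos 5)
Your proof is correct and follows essentially the same route as the paper: reduce to saddle-connection pairs, bound distinct-direction pairs via $\mbox{KVol}$ of the flat torus, reduce same-direction pairs (which only occur with nonzero intersection in the odd-odd directions) to the condition that $(x,y)$ avoid the disks $D(p,q)$ of radius $1/2q^2$ centered at $(-p/q,1/2q^2)$, and show that only $D(\pm 1,1)=V_{\mp 1}$ meets $\mathcal{D}$. The differences are cosmetic: your single ``window'' argument replaces the paper's two-case analysis ($|p|>|q|$ versus $|p|<|q|$), and your explicit lower bound $K_{\infty,0}(x,y)=y/\sqrt{x^2+y^2}\to 1$ for the $y\to\infty$ limit is a touch more self-contained than the paper's appeal to continuity of $\mbox{KVol}$ along the boundary line $x=1$.
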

\begin{proof}
By Lemma \ref{majorationPetitCarreaux} and Subsubsection \ref{subsecSaddle connections and intersection},  finding which elements $X_s(x,y)$ of $\mathcal{T}_s$   have $\mbox{KVol}(X_s(x,y))>2s-1$ amounts to finding all $(x,y)$ in the fundamental domain $\mathcal{D}$ such that there exist $p,q \in \Z$, coprime and both odd, such that  
\begin{align}
\frac{y}{(p+qx)^2+(qy)^2}  & >  1 \\
\Longleftrightarrow (x+\frac{p}{q})^2 + (y-\frac{1}{2q^2})^2 & < \frac{1}{4q^4}
\end{align}
that is, $(x,y)$ lies inside the open disk $D(p,q)$ of radius $1/2q^2$, centered at $(-p/q, 1/2q^2)$. 
As we shall see this only happens when $|p|=|q|=1$. Indeed if $|p|>|q|$, the center of $D(p,q)$ lies at least $1/q$ apart from the vertical boundaries of $\mathcal{D}$, and since the radius of $D(p,q)$ is $<1/q$, the whole $D(p,q)$ lies outside of $\mathcal{D}$. If $|p|<|q|$, the center of $D(p,q)$ lies below the half-circle $x^2+y^2=1$, and the distance between the center of $D(p,q)$ and the half-circle $x^2+y^2=1$ is greater than the radius of $D(p,q)$, because
\[
1- \sqrt{\frac{p^2}{q^2}+\frac{1}{4q^4}} > \frac{1}{2q^2},
\]
so the whole $D(p,q)$ lies below the half-circle $x^2+y^2=1$. Since $p$ and $q$ are coprime, the only remaining possibility is $p=\pm 1$, $q=1$. So for any $X_s(x,y)$ in $\mathcal{T}_s$, we have $\mbox{KVol}(X_s(x,y))\leq2s-1$, unless $(x,y)$ lies in the horocyclic neighborhood $V_{\pm 1}$ of the lower cusp of $\mathcal{D}$, bounded by the dotted circles depicted in Figure \ref{horocycle}.

For $(x,y)$ inside $V_{\pm 1}$, we have 
\[
\mbox{KVol}(X_s(x,y)) \geq  \max \left( \frac{(2s-1)y}{\sqrt{(1+x)^2+y^2}} , \frac{(2s-1)y}{\sqrt{(1-x)^2+y^2}}  \right) >2s-1
\]
and
\[
\lim_{y \rightarrow 0} \mbox{KVol}(X_s(x,y)) = +\infty.
\]
Since $\mbox{KVol}(X_s(1,y))=2s-1$, $\mbox{KVol}$ is continuous as a function of $(x,y)$, and  every $(x,y)$ in $\mathcal{D}$ is within distance $1/y$ of $(1,y)$, it follows that  $\mbox{KVol}(X_s(x,y))$ tends to $2s-1$ when $y$ goes to $\infty$.
\end{proof}

For $p,q \in \Z$, $p \wedge q = 1$, setting $r=p/q$, and for $(x,y) \in \Hy^2$, define 
\[
J_r (x,y)= \frac{y}{(p+qx)^2+(qy)^2} .
\]

Now, thanks to Lemmata  \ref{majorationPetitCarreaux} and \ref{>3}, we can give a more precise version of Equation~(\ref{etoile}) :
\begin{equation}
\forall (x,y) \in \mathcal{D}, \      \mbox{KVol}(X_s(x,y))= (2s-1) \max \large\{ J_1 (x,y), J_{-1} (x,y), \sup_{r \neq r' \in \Q} I_{r,r'} K_{r,r'} (x,y)\large\}.
\end{equation}

\begin{lemma}
For  $(x,y) \in \mathcal{D}$, for all $(r,r') \in \mbox{End}(\mathcal{Z})$, we have 
\[
K_{r,r'} (x,y) \geq  \sqrt{\frac{143}{144}}
\]
and equality occurs if and only if $(x,y)=(\pm 9/14, \sqrt{143}/14)$.
\end{lemma}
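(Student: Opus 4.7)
The strategy is to translate the claim into a hyperbolic distance estimate. From the already proved formula $K_{r,r'}(x,y)=\cos\theta_{r,r'}(x,y)$ together with the standard identity $\cos\theta_{r,r'}(x,y)=1/\cosh d_{r,r'}(x,y)$ (relating the angle $\theta_{r,r'}(x,y)$ between the geodesic $\gamma_{r,r'}$ and its equidistant hypercycle through $(x,y)$ to the hypercycle's width $d_{r,r'}(x,y)$, and equivalent to the parallelism identity $\sin\theta=\tanh d$), one obtains $K_{r,r'}(x,y)=1/\cosh(d_{r,r'}(x,y))$. Hence the statement amounts to the assertion that every $(x,y)\in\mathcal{D}$ lies within hyperbolic distance $\operatorname{arccosh}(12/\sqrt{143})$ of the web $\mathcal{Z}$, with equality exactly at $(\pm 9/14, \sqrt{143}/14)$.

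To pin down the extremal configuration I would first write down the closed form
\[
\cosh d_{r_1,r_2}(x,y)=\frac{\sqrt{\bigl[(x-r_1)^2+y^2\bigr]\bigl[(x-r_2)^2+y^2\bigr]}}{|r_2-r_1|\,y}
\]
for the distance to a half-circle geodesic $\gamma_{r_1,r_2}$ (with the degeneration $\cosh d_{r,\infty}(x,y)=\sqrt{(x-r)^2+y^2}/y$ for a vertical geodesic) and substitute $P^*:=(9/14,\sqrt{143}/14)$. A direct check shows that the three geodesics $\gamma_{-1,1}$, $\gamma_{0,2}$, $\gamma_{-2,1}$—all of which have integer endpoints and so belong to $\mathcal{Z}$—are simultaneously at distance $\operatorname{arccosh}(12/\sqrt{143})$ from $P^*$. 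Geometrically, $P^*$ is the incenter of the semi-ideal hyperbolic triangle $\Delta^*$ bounded by these three geodesics, with ideal vertex at the cusp $1$ (the common ideal endpoint of $\gamma_{-1,1}$ and $\gamma_{-2,1}$) and regular vertices at $(1/2,\sqrt{3}/2)$ and $(2/3,2\sqrt{2}/3)$; the mirror point $(-9/14,\sqrt{143}/14)$ comes for free from the $x\mapsto -x$ symmetry, which normalizes $\Gamma$ and hence preserves $\mathcal{Z}$.

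For the global inequality I would use that symmetry to restrict to $\mathcal{D}\cap\{x\geq 0\}$ and show that each connected component of $(\mathcal{D}\cap\{x\geq 0\})\setminus\mathcal{Z}$ has inradius at most $\operatorname{arccosh}(12/\sqrt{143})$, with equality only in the component containing $P^*$. Three regimes need separate treatment: near the cusp at $\infty$, the geodesics $\gamma_{-n,n}$ for $n\geq 2$ foliate the cusp region into horizontal strips whose inradii shrink to $0$; near the cusp at $1$, the horocyclic neighborhood $V_1$ of Lemma \ref{>3} covers everything close to that cusp, and outside $V_1$ the vertical boundary geodesic $\gamma_{1,\infty}\in\mathcal{Z}$ is uniformly close; in the bounded interior, one enumerates the finitely many components cut out by $\gamma_{-1,1}$, $\gamma_{0,\infty}$, $\gamma_{1,\infty}$, $\gamma_{0,\pm 2}$, $\gamma_{\mp 2,\pm 1}$ and the further $\gamma_{\mp n,\pm 1}$ needed to subdivide the cells near the ideal vertex $1$, comparing their incircles to that of $\Delta^*$.

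The main obstacle is this interior case analysis, where a naive count of bounding geodesics tends to inflate the inradius: for example, the triangle bounded by $\gamma_{-1,1}$, $\gamma_{0,2}$, $\gamma_{1,\infty}$ has naive incenter $(5/6,\sqrt{23}/6)$ with apparent inradius $\operatorname{arccosh}(\sqrt{24/23})>\operatorname{arccosh}(12/\sqrt{143})$, but the additional geodesic $\gamma_{-2,1}\in\mathcal{Z}$ cuts it into two strictly smaller triangles, the smaller being precisely $\Delta^*$. Every other apparent super-triangle near a cusp is subdivided by a further $\mathcal{Z}$-geodesic sharing one of its ideal endpoints, and the key verification is that no such geodesic cuts the interior of $\Delta^*$—this follows from the explicit ordering, near the ideal vertex $1$, of the half-circles $\gamma_{-n,1}$ for $n\geq 1$, which all lie strictly outside the sector between $\gamma_{-1,1}$ and $\gamma_{-2,1}$ that bounds $\Delta^*$. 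Organizing these comparisons into a finite list completes the hyperbolic-geometry exercise alluded to in the introduction.
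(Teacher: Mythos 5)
Your proposal follows essentially the same route as the paper: both reduce $K_{r,r'}$ to the hyperbolic distance from $(x,y)$ to $\gamma_{r,r'}$ (the paper via $K_{r,r'}=\cos\theta_{r,r'}$, you via the equivalent $1/\cosh d_{r,r'}$), identify $(\pm 9/14,\sqrt{143}/14)$ as the incenter of the triangle bounded by $\gamma_{-1,1}$, $\gamma_{0,2}$, $\gamma_{-2,1}$, which no other $\mathcal{Z}$-geodesic crosses, and then cover the rest of the fundamental domain by equidistant (banana) neighborhoods of $\mathcal{Z}$-geodesics, handling the two cusps and the bounded region separately. The only real difference is bookkeeping: the paper covers the region $\mathcal{A}$ by the explicit neighborhoods $V_{-n,1}$, $V_{0,2}$, $V_{1/2,\infty}$ and concludes by the reflections in $x=1/2$ and $x=0$, whereas you phrase the same finite verification as an inradius bound on components of $\mathcal{D}\setminus\mathcal{Z}$; since neither write-up spells out every containment, your sketch is at a comparable level of completeness, and your numerical checks (the three equal distances at $P^*$, the vertices of the triangle) are correct.
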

\begin{proof}
	By Lemma \ref{Irr'=1}, we have $I_{r,r'}=1$ whenever $(r,r') \in \mbox{End}(\mathcal{Z})$, and by Lemma \ref{99_100}, we have $I_{r,r'} \leq 9/10 < \sqrt{\frac{143}{144}}$ whenever 
	$(r,r') \not\in \mbox{End}(\mathcal{Z})$.
	
	Elementary calculations show that the bissectors of the triangle T delimited by the geodesics  $\gamma_{-1,1}$, $\gamma_{-2,1}$ and $\gamma_{0,2}$ intersect at the point $ k=(\frac{9}{14}, \frac{\sqrt{143}}{14})$. For each $\gamma_{r,r'}$ different from the sides of T, $\gamma_{r,r'}$ does not pass through the interior of T, therefore the distance between the point  $ k$ and $\gamma_{r,r'}$ is greater than the distance between $ k$ and the sides of the triangle T.  Thus, for each $\gamma_{r,r'}$ different from  $\gamma_{-1,1}$, $\gamma_{-2,1}$ and $\gamma_{0,2}$, we have $ \theta_{r,r'}(k)> \theta_{-1,1}(k)$, so $\cos \theta_{r,r'}(k)<\cos \theta_{-1,1}(k)$,  which entails:
\[
\sup_{(r,r')} K_{r,r'} (k)=K_{-1,1}(k)=\sqrt{\frac{143}{144}}.
\]
	
	In all that follows, we call $V_{r,r'}$ the banana neighbourhood  of the geodesic $\gamma_{r,r'}$ such that 
\[
\theta_{r,r'}(x,y)= \theta_{-1,1}(\frac{9}{14}, \frac{\sqrt{143}}{14} ).
\]
Let 
\[
C_{n}=C((\frac{1-n}{2},-\frac{1}{\sqrt{143}}\frac{1+n}{2}),\frac{1+n}{2}\sqrt{\frac{144}{143}})
\] 
and
\[
 C^{n}=C((\frac{1-n}{2},\frac{1}{\sqrt{143}}\frac{1+n}{2}),\frac{1+n}{2}\sqrt{\frac{144}{143}})
 \]
be the circles with centre respectively 
\[
(\frac{1-n}{2},-\frac{1}{\sqrt{143}}\frac{1+n}{2}) \mbox{ and } (\frac{1-n}{2},\frac{1}{\sqrt{143}}\frac{1+n}{2})
\]
  and the same radius $\frac{1+n}{2}\sqrt{\frac{144}{143}}$, $ n \geq 1$.	The banana neighbourhood $V_{-n,1}$, for $ n \geq 1$, is delimited by the maximal arcs of $C_{n}$ and $C^{n}$, respectively, which are contained in $\Hy^2$ ; and the banana neighbourhood $V_{\frac{1}{2},\infty}$ is delimited by the two straight lines of  equation $y=\sqrt{143}x+1/2$ and $y=-\sqrt{143}x+1/2$, respectively. 
  
  Let $\mathcal{A}$ be the region delimited by  $ x=1/2$, $x=1$ and $\gamma_{-1,1}$. For all $n \geq 3$,  $C_{n}\cap \mathcal{A}$, $C^{n}\cap \mathcal{A}$ is contained  in the interior of, respectively, $V_{-n+1,1}$, and $V_{-n-1,1}$ ;  and $C^{2} \cap \mathcal{A}$ is contained  in the interior of $V_{-3,1}$ (see Figure \ref{covering}).

 Also, the interiors of  $V_{0,2}$, $V_{-1,1}$ and $V_{\frac{1}{2},\infty}$ cover the region delimited by  $C_{2}$, $\gamma_{-1,1}$ and  $\gamma_{\frac{1}{2},\infty}$  except the point $k$ (see Figure \ref{covering}), so the interior of  $V_{0,2}$,  $V_{\frac{1}{2},\infty}$ and $V_{-n,1}$,  $ n \geq 1$, cover all  the region $\mathcal{A}$ except the point $k$.
	
Thus for  $(x,y) \in \mathcal{A}$, $(x,y)\ne k$, we have $K_{r,r'} (x,y) >  \sqrt{\frac{143}{144}}$. By symmetry with respect to $x = 1/2$, and afterwards by $x = 0$,  we then deduce the lemma.
\end{proof}

\begin{figure}
	\begin{center}
		\includegraphics[scale=0.3]{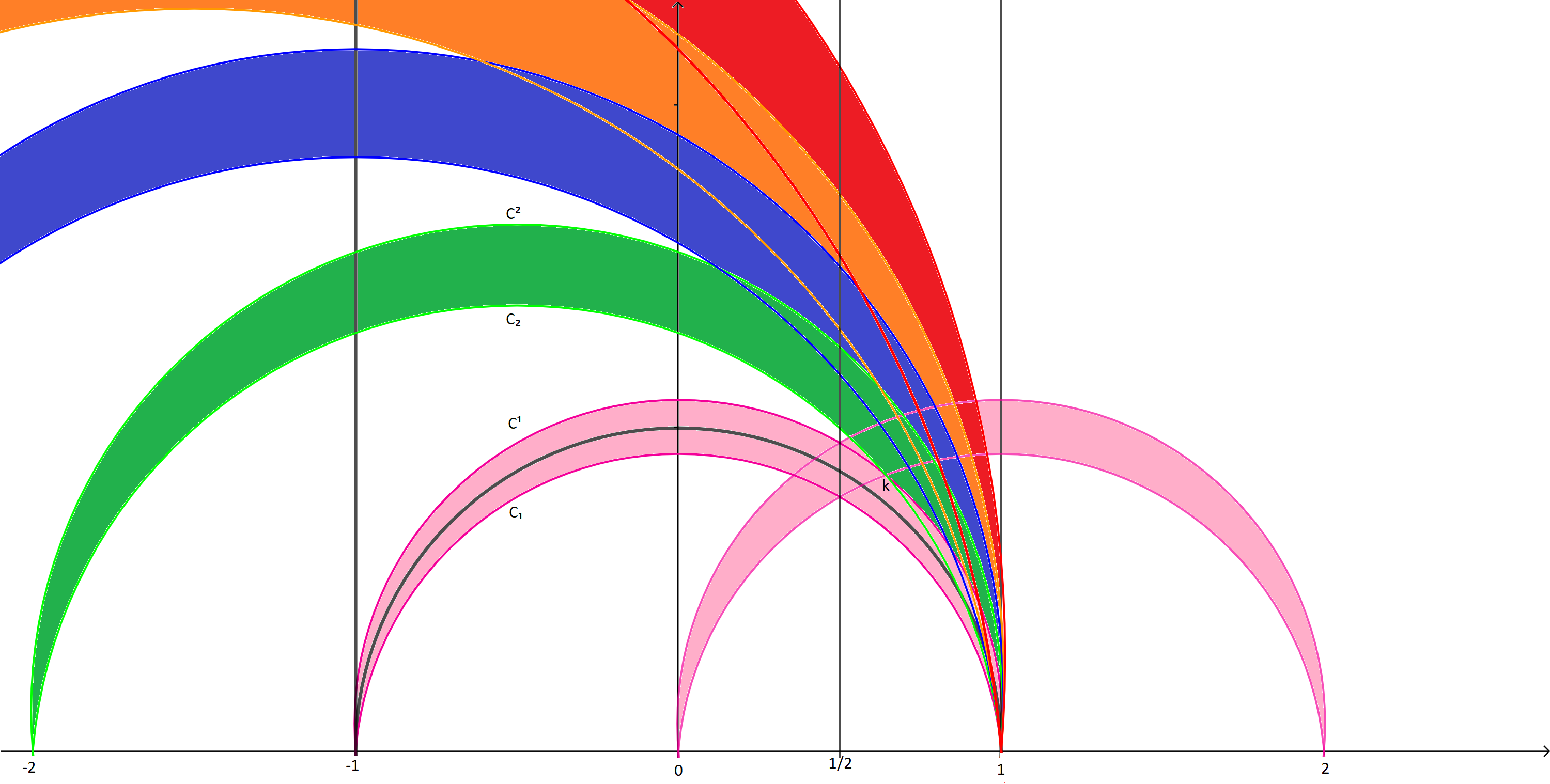}
		\caption{covering by banana neighbourhoods}
		\label{covering}
		
\end{center}
\end{figure}

We are now set to finish the proof of Theorem \ref{disqueL(2,2)}.
It is straightforward to check that 
\[
 \max \large\{ J_1 ( \frac{9}{14}, \frac{\sqrt{143}}{14}), J_{-1} ( \frac{9}{14}, \frac{\sqrt{143}}{14}) \large\}
 = \max \large\{ J_1 ( -\frac{9}{14}, \frac{\sqrt{143}}{14}), J_{-1} ( -\frac{9}{14}, \frac{\sqrt{143}}{14} ) \large\}
 = \sqrt{\frac{143}{144}}
\]
so we have 
\[
\mbox{KVol}(X_s(\pm \frac{9}{14}, \frac{\sqrt{143}}{14}))= (2s-1)\sqrt{\frac{143}{144}},
\]
and for every $(x,y) \in \mathcal{D}$, we have $\mbox{KVol}(X_s(x,y)) \geq (2s-1)\sqrt{\frac{143}{144}}$, with equality if and only if $(x,y)=(\pm 9/14, \sqrt{143}/14)$.

\bigskip

\bigskip

\noindent \textbf{Adresses} : 

\noindent Smaïl Cheboui :  USTHB, Facult\'e de Math\'ematiques, Laboratoire de Syst\`emes Dynamiques, 16111 El-Alia BabEzzouar - Alger, Alg\'erie, 

\noindent email : scheboui@usthb.dz

\noindent Arezki Kessi :       USTHB, Facult\'e de Math\'ematiques, Laboratoire de Syst\`emes Dynamiques, 16111 El-Alia BabEzzouar - Alger, Alg\'erie, 

\noindent email: akessi@usthb.dz

\noindent Daniel Massart : Institut Montpelliérain Alexander Grothendieck, CNRS, Universit\'e de  Montpellier, France, 

\noindent email : daniel.massart@umontpellier.fr (corresponding author)

\end{document}